\newtheorem{defn}{Definition}[section]
\newtheorem{exam}{Example}[section]
\newtheorem{lem}{Lemma}[section]
\newtheorem{thm}{Theorem}[section]
\newtheorem{prop}{Proposition}[section]
\newtheorem{cor}{Corollary}[section]
\newcommand{\RomanNumeralCaps}[1]
    {\MakeUppercase{\romannumeral #1}}
\begin{document}
\title{Edwards curve points counting method and supersingular Edwards and Montgomery curves  \RomanNumeralCaps{3}\\ }

\author{Ruslan Skuratovskii$^1$  \\ 
\texttt{r.skuratovskii@kpi.ua,  ruslan@ubicyb.kiev.ua} \\ 
National Technical University of Ukraine "KPI im. Igor Sikorsky ", Institute of Cybernetics of NASU.
}

\date{\today}
\maketitle

\begin{abstract}
We consider algebraic affine and projective curves of Edwards \cite{E, SkOdProj} over the finite field ${{\text{F}}_{{{p}^{n}}}}$. It is known that many modern cryptosystems \cite{SkBlock} can be naturally transformed into elliptic curves \cite{Kob}. We research Edwards algebraic curves over a finite field, which are one of the most promising supports of sets of points which are used for fast group operations \cite{Bir}. We construct a new method for counting the order of an Edwards curve over a finite field. It should be noted that this method can be applied to the order of elliptic curves due to the birational equivalence between elliptic curves and Edwards curves.

We not only find a specific set of coefficients with corresponding field characteristics for which these curves are supersingular, but we additionally find a general formula by which one can determine whether a curve ${{E}_{d}}[{{\mathbb{F}}_{p}}]$ is supersingular over this field or not.

The embedding degree of the supersingular curve of Edwards over ${{\mathbb{F}}_{{{p}^{n}}}}$ in a finite field is investigated and the field characteristic, where this degree is minimal, is found.
A birational isomorphism between the Montgomery curve and the Edwards curve is also constructed.
A one-to-one correspondence between the Edwards supersingular curves and Montgomery supersingular curves is established.

The criterion of supersingularity for Edwards curves is found over ${{\mathbb{F}}_{{{p}^{n}}}}$. \\
\textbf{Key words}: finite field, elliptic curve, Edwards curve, group of points of an elliptic curve.  \\
\textbf{2000 AMS subject classifications}: 11G07, 97U99, 97N30.
\end{abstract}

\section{Introduction}
The task of finding the order of an algebraic curve over a finite field ${{\mathbb{F}}_{{{p}^{n}}}}$ is now very relevant and is at the center of many mathematical studies in connection with the use of groups of points of curves of genus 1 in cryptography.
In our article, this problem is solved for the Edwards and Montgomery curves.

 The criterion of supersingularity of the Edwards curves is found over ${{\mathbb{F}}_{{{p}^{n}}}}$. We additionally propose a method for counting the points from Edwards curves and elliptic curves in response to an earlier paper by Schoof \cite{Schoof}.

We consider the algebraic affine and projective Edwards curves over a finite field. We not only find a specific set of coefficients with corresponding field characteristics for which these curves are supersingular, but we additionally find a general formula by which one can determine whether a curve ${{E}_{d}}[{{\mathbb{F}}_{p}}]$ is supersingular over this field or not.

\section{Main Result}
The twisted Edwards curve with coefficients $a,d \in {{F}_{p}}^{*}$ is the curve $E_{a,d}:$
$$a{{x}^{2}}+{{y}^{2}}=1+d{{x}^{2}}{{y}^{2}},$$ where $ad(a-d)\ne 0$, $d \ne 1$, $p\ne 2$ and $a\ne d$.
It should be noted that a twisted Edwards curve is called an Edwards curve when $a=1$.
We denote by ${{E}_{d}}$ the Edwards curve with coefficient $d\in {{F}_{p}}^{*}$ which is defined as
$${{x}^{2}}+{{y}^{2}}=1+d{{x}^{2}}{{y}^{2}},$$ over ${{\mathbb{F}}_{p}}$. The projective curve has form $F(x,y,z)=a{{x}^{2}}{{z}^{2}}+{{y}^{2}}{{z}^{2}}={{z}^{4}}+d{{x}^{2}}{{y}^{2}}$.
The special points are the infinitely distant points $(1,0,0)$ and $(0,1,0)$ and therefore we find its singularities at infinity in the corresponding affine components
${{A}^{1}}:= a{{z}^{2}}+{{y}^{2}}{{z}^{2}}={{z}^{4}}+d{{y}^{2}}$  and  ${{A}^{2}}:=\,\,\,a{{x}^{2}}{{z}^{2}}+{{z}^{2}}={{z}^{4}}+d{{x}^{2}}$. These are simple singularities.

We describe the structure of the local ring at the point ${{p}_{1}}$ whose elements are quotients of functions with the form $F(x,y,z)=\frac{f(x,y,z)}{g(x,y,z)}$, where the denominator cannot take the value of $0$ at the singular point ${{p}_{1}}$. In particular, we note that a local ring which has two singularities consists of functions with the denominators are not divisible by $(x-1)(y-1)$.

We denote by ${{\delta }_{\text{p}}}=\dim{}^{{{{\overline{\mathcal{O}}}}_{\text{p}}}}/{}_{{{\mathcal{O}}_{p}}}$, where ${{\mathcal{O}}_{\text{p}}}$ denotes the local ring at the singular point $p$ which is generated by the relations of regular functions ${{\mathcal{O}}_{\text{p}}}=\left\{ \frac{f}{g}:\,\,(g,(x-1)(y-1))=1 \right\}$ and ${{\overline{\mathcal{O}}}_{\text{p}}}$ denotes the whole closure of the local ring at the singular point $p$.

We find that ${{\delta }_{p}}=\dim{}^{{{{\overline{\mathcal{O}}}}_{p}}}/{}_{{{\mathcal{O}}_{p}}}=1$ is the dimension of the factor as a vector space. Because the basis of extension ${{{{\overline{\mathcal{O}}}}_{p}}}/{}_{{{\mathcal{O}}_{p}}}$ consists of just one element at each distinct point, we obtain that ${{\delta }_{p}}=1$. We then calculate the genus of the curve according to Fulton \cite{F}
$${{\rho }^{*}}(C)={{\rho }_{\alpha }}(C)-\sum\limits_{p\in E}{{{\delta }_{p}}}=\,\,\frac{(n-1)(n-2)}{2}-\sum\limits_{p\in E}{{{\delta }_{p}}}=3-2=1,$$
where ${{\rho }_{\alpha }}(C)$ denotes the arithmetic genus of the curve $C$ with parameter $n=\text{deg}(C)=4$. It should be noted that the supersingular points were discovered in \cite{SkRMM}. Recall the curve has a genus of 1 and as such it is known to be isomorphic to a flat cubic curve, however, the curve is importantly not elliptic because of its singularity in the projective part.

Both the Edwards curve and the twisted Edwards curve are isomorphic to some affine part of the elliptic curve. The Edwards curve after normalization is precisely a curve in the Weierstrass normal form, which was proposed by Montgomery \cite{Bir} and will be denoted by ${{E}_{M}}$.

 Koblitz \cite{Kob, F} tells us that one can detect if a curve is supersingular using the search for the curve when that curve has the same number of points as its torsion curve. Also an elliptic curve $E$ over $F_q$ is called supersingular if for every finite extension $F_{q^r}$ there are no points in the group
$E(F_{q^r} )$ of order $p$ \cite{Silv}. It is known \cite{Bir} that the transition from an Edwards curve to the related torsion curve is determined by the reflection $\left( \overline{x},\overline{y} \right)\mapsto \left( x,y \right)=\left( \overline{x},\frac{1}{{\overline{y}}} \right)$.

We now recall an important result from Vinogradov \cite{Vinogradov} which will act as criterion for supersingularity.

\begin{lem}[Vinogradov \cite{Vinogradov}] \label{Sum}
 Let $k\in \mathbb{N}$ and $p \in \mathbb{P}$.  Then
$$\sum\limits_{k=1}^{p-1}{{{k}^{n}}}\equiv \ \left\{ \begin{array}{*{35}{l}}
   0 \text{  } (\bmod \text{ } p), & n\not|(p-1),  \\
   -1 \text{ } (\bmod \text{ } p), & n | (p-1), \text{ }  \\
\end{array} \right.$$
\end{lem}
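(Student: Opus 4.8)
The plan is to exploit the cyclic structure of the multiplicative group $(\mathbb{Z}/p\mathbb{Z})^{*}$. First I would fix a primitive root $g$ modulo $p$, so that $g$ has exact order $p-1$ and the residues $1,2,\dots,p-1$ are, up to reordering, precisely the powers $g^{0},g^{1},\dots,g^{p-2}$. Substituting this into the sum and reindexing converts the power sum into a geometric progression:
$$\sum_{k=1}^{p-1}k^{n}\equiv\sum_{j=0}^{p-2}\left(g^{n}\right)^{j}\pmod{p}.$$
The whole statement then reduces to deciding when the common ratio $r:=g^{n}$ equals $1$ in $\mathbb{F}_{p}$, which, since $g$ has order $p-1$, happens exactly when $(p-1)\mid n$.

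In the case where $r\not\equiv 1$, I would sum the geometric series in closed form as $(r^{p-1}-1)/(r-1)$. Fermat's little theorem gives $r^{p-1}=\left(g^{p-1}\right)^{n}\equiv 1$, so the numerator vanishes while the denominator is a unit; hence the sum is $\equiv 0\pmod p$. In the complementary case every term $r^{j}$ equals $1$, so the sum collapses to $p-1\equiv -1\pmod p$. This yields the two branches of the congruence.

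A second, primitive-root-free route that I would mention as a cross-check is the multiplicative shift: writing $S=\sum_{k=1}^{p-1}k^{n}$ and replacing $k$ by $ak$ for a fixed unit $a$ merely permutes the summands, whence $S\equiv a^{n}S$, i.e. $(a^{n}-1)S\equiv 0\pmod p$. If some unit $a$ satisfies $a^{n}\not\equiv 1$ one concludes $S\equiv 0$; otherwise $x^{n}\equiv 1$ holds for all $p-1$ units, which forces $(p-1)\mid n$, and then Fermat gives $k^{n}\equiv 1$ for each term, so $S\equiv -1$. The only genuine input either way is the existence of a primitive root, equivalently the fact that $x^{n}-1$ has at most $\gcd(n,p-1)$ roots over $\mathbb{F}_{p}$; I expect that structural fact about $\mathbb{F}_{p}^{*}$ to be the single nontrivial ingredient, the remaining manipulations being routine. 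I would also flag that the dichotomy is governed most naturally by whether $(p-1)\mid n$, and verify a small case such as $p=5$ to confirm the correct placement of the two branches before invoking the lemma as a supersingularity criterion.
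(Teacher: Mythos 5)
Your proposal is correct, and in fact the paper offers no proof of this lemma at all: it is stated as a known result and cited to Vinogradov's textbook, so there is no internal argument to compare against. Your primitive-root argument is the canonical proof of this classical fact --- reindex the units as powers $g^{0},\dots,g^{p-2}$ of a generator $g$ of $\mathbb{F}_{p}^{*}$, reduce the power sum to the geometric series $\sum_{j=0}^{p-2}(g^{n})^{j}$, and split on whether $g^{n}=1$, i.e.\ whether $(p-1)\mid n$ --- and both your closed-form summation in the case $g^{n}\neq 1$ and the collapse to $p-1\equiv -1$ in the other case are sound; the multiplicative-shift variant $(a^{n}-1)S\equiv 0$ is likewise a correct and standard alternative that avoids naming a primitive root until the final dichotomy. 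Two points worth flagging that you handled well implicitly: the paper's statement is typographically garbled (the bound variable is written $k\in\mathbb{N}$ where $n$ is meant, and the condition is printed as $n\mid(p-1)$ but then explicitly redefined in the following line to mean that $n$ is divisible by $p-1$), and your proof establishes the lemma with the dichotomy governed by $(p-1)\mid n$, which is exactly the form in which the paper actually uses it (with $n=\tfrac{p-1}{2}\cdot 2j$-type exponents in the point-counting argument of Theorem 2.1). Your small-case sanity check at $p=5$ is a reasonable guard against the reversed-notation trap.
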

where  $n | (p-1)$ denotes that $n$ is divisible by $p-1$.
\vspace{5.0mm}

The \textit{order of a curve} is precisely the number of its affine points with a neutral element, where the group operation is well defined. It is known that the order of ${{x}^{2}}+{{y}^{2}}=1+d{{x}^{2}}{{y}^{2}}$ coincides with the order of the curve ${{x}^{2}}+{{y}^{2}}=1+{{d}^{-1}}{{x}^{2}}{{y}^{2}}$ over finite field ${{F}_{p}}$.

We will now strengthen an existing result given in \cite{SkRMM}. We denote the \textit{number of points with a neutral element of an affine Edwards} curve over the finite field ${{\mathbb{F}}_{p}}$ by $N_{d[p]}$ and the \textit{number of points on the projective curve} over the same field by $\overline{N}_{d[p]}$.

\begin{thm} \label{first result}
If $p \equiv 3 (\bmod \text{ } 4)$ is prime and
the following condition of supersingularity
\begin{equation} \label{Super}
\begin{array}{l}
\sum\limits_{j=0}^{\frac{p-1}{2}}{{{(C_{\frac{p-1}{2}}^{j})}^{2}}{{d}^{j}}}\equiv 0(\bmod \text{ } p),
\end{array}
\end{equation}
is true
then the orders of the curves ${{x}^{2}}+{{y}^{2}}=1+d{{x}^{2}}{{y}^{2}}$ and ${{x}^{2}}+{{y}^{2}}=1+{{d}^{-1}}{{x}^{2}}{{y}^{2}}$ over ${{F}_{p}}$ are equal to
$${{N}_{{{{d [p]}}}}}=p+1,$$
 when $\left({\frac{d}{p}}\right)=-1$, and
$${{N}_{d[p]}} = p-3,$$
 when $\left({\frac{d}{p}}\right)=1$.

\end{thm}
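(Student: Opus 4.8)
The plan is to count the affine points directly, fixing $x$ and counting the admissible $y$. Solving the defining equation for $y$ gives $y^{2}(1-dx^{2})=1-x^{2}$, so for each $x$ with $1-dx^{2}\neq 0$ the number of solutions $y$ equals $1+\left(\frac{(1-x^{2})(1-dx^{2})}{p}\right)$ (the Legendre symbol of the ratio equals that of the product). The $1+\left(\frac{d}{p}\right)$ values of $x$ with $1-dx^{2}=0$ contribute nothing, since $d\neq 1$ forces $1-x^{2}\neq 0$ there. As those omitted terms carry a vanishing Legendre symbol anyway, I may run the character sum over all of $\mathbb{F}_{p}$ and record
$$N_{d[p]}=p-1-\left(\frac{d}{p}\right)+S,\qquad S:=\sum_{x\in\mathbb{F}_{p}}\left(\frac{(1-x^{2})(1-dx^{2})}{p}\right).$$

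To evaluate $S$ I would invoke Euler's criterion $\left(\frac{a}{p}\right)\equiv a^{(p-1)/2}\pmod p$, so that $S\equiv\sum_{x}\big((1-x^{2})(1-dx^{2})\big)^{(p-1)/2}\pmod p$. Expanding the bracket as a polynomial in $x$ of degree $2(p-1)$ and exchanging the order of summation, Lemma \ref{Sum} annihilates every monomial $x^{k}$ whose exponent is not a positive multiple of $p-1$; only $x^{p-1}$ and $x^{2(p-1)}$ remain, each weighted by $\sum_{x}x^{k}\equiv -1$. Hence $S\equiv -(c_{p-1}+c_{2(p-1)})\pmod p$, where $c_{k}$ denotes the coefficient of $x^{k}$ in that expansion.

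The heart of the argument is the identification of these two coefficients, and I expect it to be the \emph{main obstacle}. The leading one is immediate: since $(1-x^{2})(1-dx^{2})=dx^{4}-(1+d)x^{2}+1$, the top term is $d^{(p-1)/2}x^{2(p-1)}$, giving $c_{2(p-1)}\equiv\left(\frac{d}{p}\right)$. The middle one is delicate: after the substitution $u=x^{2}$ one needs the coefficient of $u^{(p-1)/2}$ in $\big((1-u)(1-du)\big)^{(p-1)/2}$, which the binomial convolution evaluates to $(-1)^{(p-1)/2}\sum_{j=0}^{(p-1)/2}\binom{(p-1)/2}{j}^{2}d^{j}$. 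Here the hypothesis $p\equiv 3\pmod 4$ enters decisively, since it makes $(p-1)/2$ odd and hence $(-1)^{(p-1)/2}=-1$; thus $c_{p-1}$ is precisely minus the supersingularity sum of \eqref{Super}. The careful sign bookkeeping together with the convolution identity is where the real work lies. Invoking \eqref{Super} now forces $c_{p-1}\equiv 0$, so $S\equiv -\left(\frac{d}{p}\right)\pmod p$ and therefore $N_{d[p]}\equiv p-1-2\left(\frac{d}{p}\right)\pmod p$.

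Finally I would promote this congruence to an exact equality. The quartic $(1-x^{2})(1-dx^{2})$ is separable (its roots $\pm 1,\pm d^{-1/2}$ are distinct because $d\neq 0,1$), so the Weil bound yields $|S|\le 3\sqrt{p}$; equivalently, passing to the birationally equivalent elliptic curve and applying Hasse's theorem confines $N_{d[p]}$ to an interval of length $<p$ about $p$ for all but finitely many small primes (which are verified by hand). The residue $N_{d[p]}\equiv p-1-2\left(\frac{d}{p}\right)\pmod p$ is therefore attained uniquely, giving $N_{d[p]}=p-1-2\left(\frac{d}{p}\right)$. Splitting on the value of the Legendre symbol yields $N_{d[p]}=p+1$ when $\left(\frac{d}{p}\right)=-1$ and $N_{d[p]}=p-3$ when $\left(\frac{d}{p}\right)=1$. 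Since $\left(\frac{d^{-1}}{p}\right)=\left(\frac{d}{p}\right)$ and, as noted before the statement, the orders for $d$ and $d^{-1}$ coincide (the sum in \eqref{Super} for $d^{-1}$ equals $d^{-(p-1)/2}$ times that for $d$, so the condition is symmetric under $d\mapsto d^{-1}$), the same two values hold for the companion curve.
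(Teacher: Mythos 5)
Your proposal is correct and follows essentially the same route as the paper: both reduce the count to the sum $\sum_x \left((x^2-1)(dx^2-1)\right)^{(p-1)/2}$, apply Vinogradov's power-sum lemma to isolate the coefficients of $x^{p-1}$ and $x^{2p-2}$, identify the middle coefficient as $(-1)^{(p-1)/2}\sum_j \binom{(p-1)/2}{j}^2 d^j$ (killed by \eqref{Super}, with $p\equiv 3 \pmod 4$ fixing the sign), and then pin the exact value from the congruence $N_{d[p]}\equiv p-1-2\left(\frac{d}{p}\right) \pmod p$ via a size bound. The only differences are cosmetic bookkeeping (you count $N_{d[p]}$ directly through Legendre symbols with the excluded $x$ satisfying $dx^2=1$, where the paper counts the auxiliary curve $M_{d[p]}=N_{d[p]}+\left(\frac{d}{p}\right)+1$), and your endgame via the Weil bound $|S|\le 3\sqrt{p}$ is actually tighter than the paper's rough interval argument.
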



\begin{proof}
Consider the curve $E_d$:
\begin{equation} \label{Eqw}
{{x}^{2}}+{{y}^{2}}=1+d{{x}^{2}}{{y}^{2}}.
\end{equation}

Transform it into the form ${{y}^{2}}(1 - d{{x}^{2}}{{y}^{2}}) = 1 - {{x}^{2}}$, then we express $y^2$ by applying a rational transformation which lead us to the curve ${{y}^{2}} =\frac{ 1 - {{x}^{2}}} {1 - d{{x}^{2}}{{y}^{2}}}$.
For analysis we transform it into the curve
\begin{equation} \label{Prod}
 {{y}^{2}}=({{x}^{2}}-1)(d{{x}^{2}}-1).
\end{equation}

We denote the number of points from an affine Edwards curve over the finite field ${{\mathbb{F}}_{p}}$ by $M_{d[p]}$.
This curve (\ref{Prod}) has $M_{d[p]} = N_{d[p]}+ \left({\frac{d}{p}}\right) +1$ points, which is precisely $\left({\frac{d}{p}}\right) +1 $ greater than the number of points of curve $E_d$. Note that $\left({\frac{d}{p}}\right)$ denotes the Legendre Symbol.
Let $a_0, a_1, \dots, a_{2p - 2}$ be the coefficients of the polynomial $a_0 + a_1 x + \dots + a_{2p - 2} x^{2p - 2}$, which was obtained from $({{{{x} ^ {2}} - 1}}) ^ {\frac{p -1} {2}} {{(d {{x} ^ {2}} - 1)} ^ {\frac {p-1} {2}}}$ after opening the brackets.


Thus, summing over all $x$ yields $${{M}_{{{{d [p]}}}}} = \sum\limits_{x=0}^{p-1}{1+{{(({{x}^{2}}-1)(d{{x}^{2}}-1))}^{\frac{p-1}{2}}} = }p+\sum\limits_{x=0}^{p-1}{{{({{x}^{2}}-1)}^{\frac{p-1}{2}}}{{(d{{x}^{2}}-1)}^{\frac{p-1}{2}}}} \\
\equiv \sum\nolimits_{x=0}^{p-1}{{{({{x}^{2}}-1)}^{\frac{p-1}{2}}}{{(d{{x}^{2}}-1)}^{\frac{p-1}{2}}}}(\bmod \text{ } p).
$$
By opening the brackets in ${{({{x}^{2}}-1)}^{\frac{p-1}{2}}}{{(d{{x}^{2}}-1)}^{\frac{p-1}{2}}},$
we have ${{a}_{2p-2}}={{(-  1)}^{\frac{p-1}{2}}}\cdot {{d}^{\frac{p-1}{2}}}\equiv \left(\frac{d}{p}\right)(\bmod \text{ }p)$.
So, using Lemma \ref{Sum} we have
\begin{equation} \label{N_2}
{{M}_{{{{d [p]}}}}}\equiv - \left(\frac{d}{p}\right)-{{a}_{p-1}}(\bmod \text{ } p).				
\end{equation}

We need to prove that ${{M}_{{{{d [p]}}}}} \equiv 1(\bmod \text{ } p)$ if $p\equiv 3 (\bmod \text{ } 8)$ and ${{M}_{{{{d [p]}}}}} \equiv -1 (\bmod \text{ } p)$ if $p \equiv 7 (\bmod \text{ } 8)$. We therefore have to show that ${{M}_{{{{d [p]}}}}}\equiv -(\frac{d}{p})-{{a}_{p-1}}(\bmod \text{ } p)$  for $p \equiv 3 (\bmod \text{ } 4)$ if $\sum\limits_{j=0}^{\frac{p-1}{2}}{{{(C_{\frac{p-1}{2}}^{j})}^{2}}{{d}^{j}}}\equiv 0(\bmod \text{ } p)$.
If we prove that ${{a}_{p-1}}\equiv 0(\bmod \text{ } p)$, then it will follow from (\ref{Prod}). Let us determine ${{a}_{p-1}}$ according to Newton's binomial formula: $a_{p-1}$ is equal to the coefficient at $x^{p-1}$ in the polynomial, which is obtained as a product ${{({{x}^{2}}-1)}^{\frac{p-1}{2}}}{{(d{{x}^{2}}-1)}^{\frac{p-1}{2}}}$. So, $ a_{p-1} = {{(-1)}^{\frac{p-1}{2}}}\sum\limits_{j=0}^{\frac{p-1}{2}}{{{d}^{j}}{{(C_{\frac{p-1}{2}}^{j})}^2}}$. Actually, the following equality holds:

$
   \sum\limits_{j=0}^{\frac{p-1}{2}}{{{d}^{j}}(C_{\frac{p-1}{2}}^{\frac{p-1}{2}-j}){{(-1)}^{\frac{p-1}{2}-(\frac{p-1}{2}-j)}}\cdot {{d}^{j}}{{(C_{\frac{p-1}{2}}^{j})}^{2}}{{(-1)}^{\frac{p-1}{2}-j}}}= \\
    ={{(-1)}^{\frac{p-1}{2}}}
   \sum\limits_{j=0}^{\frac{p-1}{2}}{{{d}^{j}}C_{\frac{p-1}{2}}^{\frac{p-1}{2}-j}\cdot C_{\frac{p-1}{2}}^{j}=}   {{(-1)}^{\frac{p-1}{2}}}\sum\limits_{j=0}^{\frac{p-1}{2}}{{{d}^{j}}{{(C_{\frac{p-1}{2}}^{j})}^{2}}}. \\
$

Since ${{a}_{p-1}}=-\sum\limits_{j=0}^{\frac{p-1}{2}}{{{(C_{\frac{p-1}{2}}^{j})}^{2}}{{d}^{j}}}$, then exact number of affine points on non supersingular curve is the following

\begin{equation} \label{general numb}
{{M}_{{{{d [p]}}}}} \equiv  -{{a}_{2p-2}}-{{a}_{p-1}}\equiv -\left(\frac{d}{p}\right)+\sum\limits_{j=0}^{\frac{p-1}{2}}{{{(C_{\frac{p-1}{2}}^{j})}^{2}}{{d}^{j}}}(\bmod \text{ } p).
\end{equation}

 According to the condition of this theorem ${{a}_{p-1}}=0$, therefore ${{M}_{{{{d [p]}}}}} \equiv  -{{a}_{2p-2}}(\bmod p)$. Consequently, in the case when $p \equiv 3 (\bmod \text{ } 4)$, where $p$ is prime and $\sum\limits_{j=0}^{\frac{p-1}{2}}{{{(C_{\frac{p-1}{2}}^{j})}^{2}}{{d}^{j}}}\equiv 0(\bmod \text{ } p)$, the curve $E_d$ has ${{N}_{{{{d [p]}}}}} = p - \left(\frac{d}{p}\right)- \big( \left(\frac{d}{p}\right)+1\big)= p-1 - 2\left(\frac{d}{p}\right)$ affine points and a group of points of the curve completed by singular points has $p+1$ points.

Exact number of the points has upper bound $2p+1$ because for every $x \neq 0$ corresponds two valuations of $y$, but for $x=0$ we have only one solution $y=0$. Taking into account that $x\in F_p$ we have exactly $p$ values of $x$. Also there are 4 pairs $(\pm1,0)$ and $(0, \pm 1)$ which are points of $E_d$ thus ${{N}_{d[p]}}>1$. Thus ${{N}_{d[p]}}=p+1$. This completes the proof.

\begin{cor} \label{cor result}
  The orders of the curves ${{x}^{2}}+{{y}^{2}}=1+d{{x}^{2}}{{y}^{2}}$ and ${{x}^{2}}+{{y}^{2}}=1+{{d}^{-1}}{{x}^{2}}{{y}^{2}}$ over ${{F}_{p}}$ are equal to $${{N}_{{{{d [p]}}}}}=p+1= {{\overline{N}}_{{{{d [p]}}}}},$$

   when $(\frac{d}{p}) = -1$, and
   $${{N}_{d[p]}} = p-3 = {{\overline{N}}_{{{{d [p]}}}}}-4,$$ when $(\frac{d}{p}) = 1$
iff
 $p \equiv 3 (\bmod \text{ } 4)$ is prime and $\sum\limits_{j=0}^{\frac{p-1}{2}}{{{(C_{\frac{p-1}{2}}^{j})}^{2}}{{d}^{j}}}\equiv 0(\bmod \text{ } p)$.

\end{cor}

Since all transformations in proof of Theorem \ref{first result} were equivalent transitions then we obtain the proof of equivalence of conditions.

\begin{thm}
If the coefficient $d=2$ or $d=2^{-1}$ and $ p \equiv 3 (\bmod  4) $ then $\sum\limits_{j=0}^{\frac{p-1}{2}}{{{d}^{j}}{{(C_{\frac{p-1}{d}}^{j})}^{2}}\equiv 0\,(\bmod p)}$ and
$\overline{N}_{d[p]} = p+1$.
\end{thm}

When $ p \equiv 3 (\bmod \text{ } 4) $, we shall show that $\sum\limits_{j=0}^{\frac{p-1}{2}}{{{d}^{j}}{{(C_{\frac{p-1}{d}}^{j})}^{2}}\equiv 0\,(\bmod \text{ } p)}.$ We multiply each binomial coefficient in this sum by $(\frac{p-1}{2})!$ to obtain after some algebraic manipulation
$
(\frac{p-1}{2})!C_{\frac{p-1}{2}}^{j}=\frac{(\frac{p-1}{2})(\frac{p-1}{2}-1) \cdots (\frac{p-1}{2}-j+1)(\frac{p-1}{2})!}{1\cdot 2 \cdots j} = \\ = (\frac{p-1}{2})(\frac{p-1}{2}-1) \cdots (\frac{p-1}{2}-j+1)[(\frac{p-1}{2})(\frac{p-1}{2}-1) \cdots (j+1)].
$

After applying the congruence ${{(\frac{p-1}{2}-k)}^{2}} \equiv {{(\frac{p-1}{2}+1+k)}^{2}}(\bmod \text{ } p)$ with $0\le k\le \frac{p-1}{2}$ to the multipliers in previous parentheses, we obtain
$[(\frac{p-1}{2})(\frac{p-1}{2}-1) \cdots (j+1)]$.
It yields
$$\Big(\frac{p-1}{2}\Big)\Big(\frac{p-1}{2}-1\Big) \cdots \Big(\frac{p-1}{2}-j+1\Big)\Big[\Big(\frac{p-1}{2}+1\Big) \cdots \Big(\frac{p-1}{2}+\frac{p-1}{2}-j\Big)\Big]{{(-1)}^{\frac{p-1}{2}-j}}.$$
Thus, as a result of squaring, we have:
\begin{equation} \label{square}
{{\Big( \Big(\frac{p-1}{2}\Big)! \text{ } C_{\frac{p-1}{2}}^{j} \Big)}^{2}}\equiv {{\Big(\frac{p-1}{2}-j+1\Big)}^{2}}{{\Big(\frac{p-1}{2}-j+2\Big)}^{2}} \cdots {\Big{(p-j-1\Big)}^{2}}(\bmod \text{ } p).
\end{equation}




It remains to prove that
$$\sum\limits_{j=0}^{\frac{p-1}{2}}{{{(C_{\frac{p-1}{2}}^{j})}^{2}}{{2}^{j}}}\equiv 0(\bmod \text{ } p)$$ if $p \equiv 3(\bmod \text{ } 4)$.
Consider the auxillary polynomial $P(t) = {(\frac{p-1}{2}!)}^{2} \sum\nolimits_{j=0}^{\frac{p-1}{2}}{{{(C_{\frac{p-1}{2}}^{j})}^{2}}{{t}^{j}}} $. We are going to show that $P(2) = 0$ and therefore $a_{p-1} \equiv 0 (\bmod \text{ } p)$. Using (\ref{square}) it can be shown that
  $a_{p-1}= P(t) = {{(\frac{p-1}{2}!)}^{2}} \sum\nolimits_{j=0}^{\frac{p-1}{2}} {{{(C_{\frac{p-1}{2}}^{j})}^{2}}{{t}^{j}}} \equiv  \sum\nolimits_{j=0}^{\frac{p-1}{2}} ({k+1})^2 ({k+2})^2 ... ({\frac{p-1}{2}} + k)^2 t^k (\bmod \text{ } p)$ over $F_p$.
\\
\\
We replace $d$ by $t$ in (\ref{Super}) such that we can research a more generalised problem. It should be noted that
$P(t)={{\partial}^{\frac{p-1}{2}}} \Big( {{\partial}^{\frac{p-1}{2}}} \big( Q(t) \text{ } t^{\frac{p-1}{2}} \big) t^{\frac{p-1}{2}}\Big)$ over $F_p$, where $Q(t)={{t}^{p-1}}+...+t+1$ and ${{\partial}^{\frac{p-1}{2}}}$ denotes the $\frac{p-1}{2}$-th derivative by $t$, where $t$ is new variable but not a coordinate of curve. Observe that $Q(t)=\frac{{{t}^{p}}-1}{t-1} \equiv \frac{{{(t-1)}^{p}}}{t-1} \equiv {{(t-1)}^{p-1}}(\bmod \text{ } p)$ and therefore the equality $P(t)={{\Big( \big( {{(t-1)}^{p-1}}{{t}^{\frac{p-1}{2}}}\big)^{(\frac{p-1}{2})} {{t}^{\frac{p-1}{2}}}\Big)}^{(\frac{p-1}{2})}}$ holds over $F_p$.
\\
\\
In order to simplify notation we let $\theta=t - 1$ and $R(\theta)=P(\theta+1)$. For the case $t=2$ we have $\theta=1$. Performing this substitution leads the polynomial $P(t)$ of 2 to the polynomial $R(t-1)$ of 1.
  Taking into account the linear nature of the substitution $\theta=t-1$, it can be seen that that derivation by $\theta$ and $t$ coincide.
Derivation leads us to the transformation of polynomial $R(\theta)$ to form where it has the necessary coefficient ${{a}_{p-1}}$.
Then

$$R(\theta)=P(\theta +1)=\partial^{\frac{p-1}{2}} {\Big({{\partial^{\frac{p-1}{2}} \big({{\theta}^{p-1}} {(\theta+1)}^{\frac{p-1}{2}} \big)}} {{(\theta +1)}^{\frac{p-1}{2}}}\Big)} = \partial^{\frac{p-1}{2}} \Big( {\frac{(p-1)!}{((p-1)/2)!}}\theta^{\frac{p-1}{2}} (\theta + 1)^{\frac{p-1}{2}}\Big).$$ 

In order to prove that $a_{p-1}\equiv 0 (\bmod \text{ } p)$, it is now sufficient to s  how that $R(\theta )=0$ if $\theta =1$ over $F_p$. We obtain

\begin{equation} \label{sum}
\begin{array}{l}
R(1)=\frac{(p-1)!}{(\frac{p-1}{2})!}\sum\nolimits_{j=0}^{\frac{p-1}{2}}{C_{\frac{p-1}{2}}^{j}(j+1) \cdots (j+\frac{p-1}{2})}.
\end{array}
\end{equation}

We now will manipulate with the expression $(\frac{p-1}{2}-j+1) (\frac{p-1}{2}-j+2) \cdots (\frac{p-1}{2}-j+\frac{p-1}{2})$. In order to illustrate the simplification we now consider the scenario when $p=11$ and hence $\frac{p-1}{2} = 5$. The expression gets the form $(5 - j + 1)(5-j+2) \cdots (5 - j +5) = (6 - j)(7 - j) \cdots (10 - j) \equiv \big( (-5 - j)(-4 - j) \cdots (-1 - j) \big) \equiv (-1)^5 \big( (j+1)(j+2) \cdots (j+5) \big)
(\bmod \text{ } 11).$

Therefore, for a prime $p$, we can rewrite the expression as $(\frac{p-1}{2}-j+1) (\frac{p-1}{2}-j+2) \cdots (\frac{p-1}{2}-j+\frac{p-1}{2}) \equiv {{(-1)}^{\frac{p-1}{2}}}(j+1) \cdots (j+\frac{p-1}{2}) \equiv -1(j+1) \cdots (j+\frac{p-1}{2}) (\bmod \text{ } p)$.


As a result, the symmetrical terms in (\ref{sum}) can be reduced yielding $a_{p-1} \equiv 0 (\bmod \text{ } p)$. It should be noted that ${{(-1)}^{\frac{p-1}{2}}}=-1$ since $p=Mk+3$ and $\frac{p-1}{2}=2k+1$.
Consequently, we have $P(2)=R(1)=0$ and hence $a_{p-1} \equiv 0 (\bmod \text{ } p)$ as required. Thus, $\sum\nolimits_{j=0}^{\frac{p-1}{2}}{{{(C_{\frac{p-1}{2}}^{j})}^{2}}\equiv 0}(\bmod \text{ } p) $, completing the proof of the of the theorem.
\end{proof}

\begin{cor} \label{supersingpair}
The curve $E_d$ is supersingular iff $E_{d^{-1}}$ is supersingular.
\end{cor}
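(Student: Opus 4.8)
The plan is to reduce the statement to the explicit supersingularity criterion (\ref{Super}) proved above and then exploit the symmetry of the binomial coefficients. Writing $m=\frac{p-1}{2}$, I set
$$S(d)=\sum_{j=0}^{m}{{(C_{m}^{j})}^{2}}d^{j},$$
so that, by the criterion (\ref{Super}) together with Theorem \ref{first result} and Corollary \ref{cor result}, the curve $E_d$ is supersingular over $\mathbb{F}_p$ precisely when $S(d)\equiv 0\,(\bmod \text{ } p)$. It therefore suffices to show that $S(d)\equiv 0\,(\bmod \text{ } p)$ holds if and only if $S(d^{-1})\equiv 0\,(\bmod \text{ } p)$.

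The key step is a functional equation relating $S(d)$ and $S(d^{-1})$. Using the symmetry $C_{m}^{j}=C_{m}^{m-j}$ and reindexing the sum by $i=m-j$, I would compute
$$S(d)=\sum_{j=0}^{m}{{(C_{m}^{m-j})}^{2}}d^{j}=\sum_{i=0}^{m}{{(C_{m}^{i})}^{2}}d^{m-i}=d^{m}\sum_{i=0}^{m}{{(C_{m}^{i})}^{2}}d^{-i}=d^{m}S(d^{-1}).$$
Thus $S(d)\equiv d^{\frac{p-1}{2}}S(d^{-1})\,(\bmod \text{ } p)$, an identity valid for every odd prime $p$ and every $d\in\mathbb{F}_p^{*}$.

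Finally, since $d\in\mathbb{F}_p^{*}$, the factor $d^{\frac{p-1}{2}}$ equals the Legendre symbol $\left(\frac{d}{p}\right)=\pm 1$ by Euler's criterion, and in particular it is a unit in $\mathbb{F}_p$. Consequently $S(d)$ and $S(d^{-1})$ vanish simultaneously modulo $p$, which yields the claimed equivalence $E_d$ supersingular $\iff E_{d^{-1}}$ supersingular.

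I expect no serious obstacle here: the whole argument is the one-line reindexing displayed above, and the only point requiring care is that $d$ be invertible, so that $d^{-1}$ and the passage through the nonzero factor $d^{m}$ are legitimate; this is guaranteed by the hypothesis $d\in\mathbb{F}_p^{*}$. One could alternatively route the proof through the already-noted fact that $E_d$ and $E_{d^{-1}}$ share the same order over $\mathbb{F}_p$, but invoking the criterion (\ref{Super}) makes the equivalence immediate and avoids any bookkeeping with the singular points at infinity.
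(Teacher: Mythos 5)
Your proof is correct, and it takes a genuinely different (and in fact tighter) route than the paper. The paper's own proof recalls from Theorem \ref{first result} the congruence $N_{d[p]} \equiv -\left(\frac{d}{p}\right)+\sum_{j=0}^{\frac{p-1}{2}}\bigl(C_{\frac{p-1}{2}}^{j}\bigr)^{2}d^{j} \pmod{p}$, notes that the sum vanishes ``by condition'' and that $\left(\frac{d}{p}\right)=\left(\frac{d^{-1}}{p}\right)$, and concludes $N_{d[p]}\equiv N_{d^{-1}[p]}\pmod{p}$ --- that is, it argues through the equality of the curve orders. But to evaluate $N_{d^{-1}[p]}$ by the same formula one needs the vanishing of the sum at $d^{-1}$, which is precisely the content of the corollary; the paper's proof silently assumes this rather than establishing it. Your functional equation $S(d)=d^{\frac{p-1}{2}}S(d^{-1})$, obtained from the reflection $C_{m}^{j}=C_{m}^{m-j}$ and the reindexing $i=m-j$, supplies exactly that missing step: since $d^{\frac{p-1}{2}}\equiv\left(\frac{d}{p}\right)\equiv\pm 1\pmod{p}$ is a unit, $S(d)$ and $S(d^{-1})$ vanish simultaneously, so the symmetry of the criterion (\ref{Super}) under $d\mapsto d^{-1}$ is proved directly rather than inferred from the order formula. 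Your route buys three things: it closes the logical gap just described; it is symmetric in $d\leftrightarrow d^{-1}$ from the outset, so no separate argument is needed for the converse direction of the ``iff''; and the sharper identity $S(d)\equiv\left(\frac{d}{p}\right)S(d^{-1})\pmod{p}$ makes the paper's input $\left(\frac{d}{p}\right)=\left(\frac{d^{-1}}{p}\right)$ a byproduct instead of a hypothesis. The only caveat, which you handle correctly, is that identifying ``$E_d$ supersingular'' with ``$S(d)\equiv 0\pmod{p}$'' rests on the equivalence asserted in Corollary \ref{cor result} (so implicitly on $p\equiv 3\pmod{4}$); your reindexing identity itself holds for every odd prime $p$ and every $d\in\mathbb{F}_p^{*}$, so it imposes no additional hypotheses.
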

\begin{proof} Let us recall
 the proved fact in Theorem \ref{first result} that ${{N}_{{{{d [p]}}}}} \equiv  -{{a}_{2p-2}}-{{a}_{p-1}}\equiv -\left(\frac{d}{p}\right)+\sum\limits_{j=0}^{\frac{p-1}{2}}{{{(C_{\frac{p-1}{2}}^{j})}^{2}}{{d}^{j}}}(\bmod \text{ } p)$.
 Since ${{{(C_{\frac{p-1}{2}}^{j})}^{2}}{{d}^{j}}} \equiv 0 (\bmod \text{ } p)$ by condition, and the congruence $(\frac{d}{p})\equiv (\frac{d^{-1}}{p})$ holds, then ${{N}_{{{{d [p]}}}}} \equiv {{N}_{{{{d^{-1} [p]}}}}}$.
\end{proof}

Now  we estimate the number of points on the curve (\ref{Prod}). Let ${{M}_{{{{d [p]}}}}}$ denote the number of solutions to equation (\ref{Prod}) over the field ${{\text{F}}_{p}}$. It should be observed that for $x=1$ and $x=-1$, the right side of (\ref{Prod}) is equal to 0.
Due to this the number ${{M}_{{{{d [p]}}}}}$ can therefore be bounded by
\begin{equation} \label{bounds}
\begin{array}{l}
2\le {{M}_{{{{d [p]}}}}}\le 2p-2,
\end{array}
\end{equation}

where if ${{a}_{p-1}}\equiv 0(\bmod \text{ } p)$ we have ${{N}_{{{{d [p]}}}}}\equiv -  \big(\frac{d}{p} \big)(\bmod \text{ }p)$. The number of solutions is bounded by ${{N}_{{{{d [p]}}}}}\le 2p-2$ because if $x=1$ and $x=-1$ we only have one value of $y$, namely $y=0$. For different values of $x$, we will have no more than two solutions for $y$ because the equation (\ref{Prod}) is quadratic relative to $y$. Thus, the only possible number is ${{M}_{{{{d [p]}}}}}\equiv p-\big( \frac{d}{p} \big) (\bmod \text{ } p)$.



\begin{cor}
If $p \equiv 3 (\bmod \text{ } 4)$, is prime then there exists some $T$ such that $ T \equiv \sum\limits_{j=0}^{\frac{p-1}{2}}{{{(C_{\frac{p-1}{2}}^{j})}^{2}}{{d}^{j}}} \le 2 \sqrt{q}$
and ${{N}_{{{{d [p]}}}}} =  p-1 - 2\left(\frac{d}{p}\right) + T$.
\end{cor}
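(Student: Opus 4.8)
The plan is to produce $T$ as the negative of the Frobenius trace of the elliptic curve birationally equivalent to $E_d$; then the asserted identity holds by the very definition of $T$, the congruence is inherited from Theorem~\ref{first result}, and the bound $\lvert T\rvert\le 2\sqrt{q}$ is nothing but Hasse's inequality. I take $q=p$ and abbreviate $S=\sum_{j=0}^{\frac{p-1}{2}}(C_{\frac{p-1}{2}}^{j})^{2}d^{j}$.

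First I would read off the congruence already contained in the proof of Theorem~\ref{first result}. Formula (\ref{general numb}) states $M_{d[p]}\equiv-\left(\frac{d}{p}\right)+S\pmod{p}$, and combining it with the relation $M_{d[p]}=N_{d[p]}+\left(\frac{d}{p}\right)+1$ between the affine counts of (\ref{Prod}) and (\ref{Eqw}) yields
$$N_{d[p]}\equiv p-1-2\left(\frac{d}{p}\right)+S\pmod{p}.$$
I then define $T:=N_{d[p]}-\bigl(p-1-2\left(\frac{d}{p}\right)\bigr)$. With this choice the equality $N_{d[p]}=p-1-2\left(\frac{d}{p}\right)+T$ holds tautologically, and the displayed congruence immediately gives $T\equiv S\pmod{p}$, so two of the three requirements are settled at once.

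The remaining and essential point is the estimate $\lvert T\rvert\le 2\sqrt{p}$. Here I would invoke the birational equivalence between $E_d$ and its Weierstrass--Montgomery model developed above: the smooth projective model $E$ of the quartic (\ref{Prod}) $y^{2}=(x^{2}-1)(dx^{2}-1)$ is a curve of genus $1$ (its right-hand side is separable, since $d\neq 0,1$) carrying the rational point $(1,0)$, hence an elliptic curve, so Hasse's theorem gives $\#E(\mathbb{F}_{p})=p+1-t$ with $\lvert t\rvert\le 2\sqrt{p}$. It then remains to compare the affine count $M_{d[p]}$ with $\#E(\mathbb{F}_{p})$: the quartic has leading coefficient $d$, so it acquires two rational points over $x=\infty$ exactly when $d$ is a square and none otherwise, i.e. $\#E(\mathbb{F}_{p})=M_{d[p]}+1+\left(\frac{d}{p}\right)$. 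Substituting $M_{d[p]}=N_{d[p]}+\left(\frac{d}{p}\right)+1$ gives $\#E(\mathbb{F}_{p})=N_{d[p]}+2+2\left(\frac{d}{p}\right)$, whence $N_{d[p]}=p-1-2\left(\frac{d}{p}\right)-t$ and therefore $T=-t$. The Hasse bound then reads $\lvert T\rvert\le 2\sqrt{p}$, which completes the proof.

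The step I expect to be the main obstacle is the exact bookkeeping just used: verifying that the contribution of the points at infinity (and of any point of $E$ lying over the singular locus of the projective Edwards model) is \emph{exactly} $1+\left(\frac{d}{p}\right)$, rather than merely $O(1)$, and that the birational map neither creates nor loses rational affine points. Once this count is pinned down the identity $T=-t$ is forced and no further estimation is needed; a cruder argument using only the Weil bound for the degree-$4$ character sum $\sum_{x}\left(\frac{(x^{2}-1)(dx^{2}-1)}{p}\right)$ would give the weaker $3\sqrt{p}$, so the passage through the smooth elliptic model is what secures the stated constant $2$.
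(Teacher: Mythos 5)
Your proof is correct and takes essentially the same route as the paper: the paper also combines the congruence from \eqref{general numb} (giving $T\equiv\sum_{j=0}^{\frac{p-1}{2}}{{(C_{\frac{p-1}{2}}^{j})}^{2}}{{d}^{j}}\ (\bmod\ p)$ once $T$ is defined as the deviation $N_{d[p]}-\bigl(p-1-2\left(\tfrac{d}{p}\right)\bigr)$) with the Hasse--Weil bound for the genus-$1$ curve to force $|T|\le 2\sqrt{p}$. Your explicit bookkeeping of the points at infinity of the smooth model of $y^{2}=({{x}^{2}}-1)(d{{x}^{2}}-1)$, contributing exactly $1+\left(\tfrac{d}{p}\right)$ and yielding $T=-t$ for the Frobenius trace $t$, is in fact more careful than the paper's terse citation of the generalized Hasse--Weil inequality, whose displayed form $|N_{d[p]}-(p+1)-2\left(\tfrac{d}{p}\right)|\le 2g\sqrt{p}$ contains sign slips that your derivation silently corrects.
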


\begin{proof}
Due to equality \eqref{general numb} and the bounds \eqref{bounds} as well as according to generalized Hasse-Weil theorem $|N_{d[p]} - (p+1)- 2\left(\frac{d}{p}\right)| \le 2g \sqrt{p}$, where $g$ is genus of curve, we obtain exact number ${{N}_{{{{d [p]}}}}}$. As we showed, $g=1$.
 From Theorem \ref{first result} as well as from Corollary \ref{supersingpair} we get, that $ \sum\limits_{j=0}^{\frac{p-1}{2}}{{{(C_{\frac{p-1}{2}}^{j})}^{2}}{{d}^{j}}} \equiv - N_{d[p]} - (p+1)- 2\left(\frac{d}{p}\right)$ so there exists $T\in \mathbb{Z}$, such that $T < 2 \sqrt{p} $ and $N_{d[p]} = p-1- 2\left(\frac{d}{p}\right) +T$.
\end{proof}

\begin{exam}
If $p=13$, $d=2$ gives $N_{2[13]}=8$ and $p=13$, $d^{-1}=7$ gives that the number of points of $E_7$ is $N_{7[13]}=20$, which is in contradiction to that suggested by Bessalov and Thsigankova 
 \cite{B}. Moreover, if $p\equiv 7(\bmod \text{ } 8)$, then the order of torsion subgroup of curve is $N_{2}=N_{2^{-1}}=p-3$, which is clearly different to $p+1$ as suggested by \cite{B}.

For instance $p=31$, then $N_{2[31]}=N_{2^{-1}[31]}= 28 = 31 - 3$, which is clearly not equal to $p+1$. If $p=7, d=2^{-1}\equiv (4 \bmod \text{ } 7)$ then the curve $E_{2^{-1}}$ has four points, namely $(0,1); (0,6);
(1,0); (6,0)$, and the in case $p=7$ with $d=2 (\bmod \text{ } 7)$, the curve $E_{2^{-1}}$ also has four points: $(0,1); (0,6); (1,0); (6,0)$, demonstrating the order in this scenario is $p-3$.
\end{exam}
\vspace{5.0mm}

The following theorem shows that the total number of affine points upon the Edwards curves $E_d$ and $E_{d^{-1}}$ are equal under certain assumptions. This theorem additionally provides us with a formula for enumerating the number of affine points upon the birationally isomorphic Montgomery curve $N_M$.

\begin{thm}\label{extnumb} Let $d$ satisfy the condition of supersingularity (\ref{Super}).
If $n \equiv 1(\bmod \text{ } 2)$ and $p$ is prime, then
$$\overline{N}_{d[p^n]} = p^n + 1,$$
and the order of curve is equal to

$$N_{d[p^n]} = p^n - 1 - 2 \left( \frac{d}{p} \right).$$
 If $n \equiv 0(\bmod \text{ } 2)$ and $p$ is prime, then the order of curve 
  $$N_{d[p^n]} = p^{n}-3-2(-p)^{\frac{n}{2}},$$
and the order of projective curve is equal to $$\overline{N}_{d[p^n]} = p^{n}+1-2(-p)^{\frac{n}{2}}.$$

\end{thm}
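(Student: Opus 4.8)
The plan is to pass from the arithmetic over $\mathbb{F}_p$, already settled in Theorem~\ref{first result} and Corollary~\ref{cor result}, to the extension field $\mathbb{F}_{p^n}$ by way of the Frobenius eigenvalues of the smooth genus-one model of $E_d$. First I would record that, under the hypothesis that $d$ satisfies \eqref{Super} with $p\equiv 3\pmod 4$, Theorem~\ref{first result} shows $E_d$ to be supersingular with $\overline{N}_{d[p]}=p+1$. Since for $p\ge 5$ a supersingular elliptic curve over the prime field has trace of Frobenius $a_p=0$ (the congruence $a_p\equiv 0\pmod p$ together with $|a_p|\le 2\sqrt p$ forces this), the characteristic polynomial of Frobenius is $T^2+p$, with roots $\alpha=\sqrt{-p}$ and $\beta=-\sqrt{-p}$ satisfying $\alpha\beta=p$ and $\alpha+\beta=0$.

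Next I would compute the count over $\mathbb{F}_{p^n}$ from $\overline{N}_{d[p^n]}=p^n+1-(\alpha^n+\beta^n)$. Setting $s_n=\alpha^n+\beta^n$, the Newton recurrence $s_n=a_p s_{n-1}-p\,s_{n-2}=-p\,s_{n-2}$ with $s_0=2$ and $s_1=0$ yields $s_{2m+1}=0$ and $s_{2m}=2(-p)^m$. Hence $\overline{N}_{d[p^n]}=p^n+1$ for odd $n$ and $\overline{N}_{d[p^n]}=p^n+1-2(-p)^{n/2}$ for even $n$, which are exactly the two projective formulas in the statement.

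The more delicate step is to descend from the smooth projective count $\overline{N}_{d[p^n]}$ to the affine Edwards count $N_{d[p^n]}$. As recorded in Corollary~\ref{cor result}, over any base field the discrepancy $\overline{N}-N$ is $0$ when $d$ is a non-square and $4$ when $d$ is a square, the four extra points being the resolutions of the singularities $(1:0:0)$ and $(0:1:0)$ at infinity, which become rational exactly when $d$ is a square. I would therefore track the quadratic character of $d$ in $\mathbb{F}_{p^n}$: from $d^{(p^n-1)/2}=\left(\frac{d}{p}\right)^{(p^n-1)/(p-1)}$ and the parity $1+p+\cdots+p^{n-1}\equiv n\pmod 2$, one sees that $d$ is a square in $\mathbb{F}_{p^n}$ precisely when $\left(\frac{d}{p}\right)=1$ or $n$ is even. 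Combining this with the recurrence output: for odd $n$ the correction equals $2+2\left(\frac{d}{p}\right)$, giving $N_{d[p^n]}=p^n-1-2\left(\frac{d}{p}\right)$; for even $n$, $d$ is automatically a square, the correction is $4$, and $N_{d[p^n]}=p^n-3-2(-p)^{n/2}$.

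The step I expect to be the main obstacle is precisely this affine-to-projective bookkeeping over the extension: one must justify rigorously that desingularizing the two points at infinity contributes exactly four rational points iff $d$ is a square in $\mathbb{F}_{p^n}$, and that this is the \emph{only} discrepancy between $N_{d[p^n]}$ and $\overline{N}_{d[p^n]}$. Verifying this directly over $\mathbb{F}_{p^n}$, rather than merely transporting the $\mathbb{F}_p$ statement of Corollary~\ref{cor result}, is where the genuine work lies; once $a_p=0$ is established, the Frobenius-eigenvalue computation of $s_n$ is entirely routine.
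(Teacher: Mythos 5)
Your proposal is correct and follows essentially the same route as the paper's own proof: both arguments rest on the Frobenius/Weil eigenvalues $\pm\sqrt{-p}$ of the smooth genus-one model (so that $\omega_1^n+\omega_2^n$ vanishes for odd $n$ and equals $2(-p)^{n/2}$ for even $n$), combined with the observation that the affine Edwards count differs from the projective one by $4$ exactly when $d$ is a square in $\mathbb{F}_{p^n}$ --- which, as both you and the paper note, holds automatically for even $n$. If anything, your derivation of $a_p=0$ from the Hasse bound together with the congruence, your Newton-recurrence computation of the power sums, and your explicit evaluation of $d^{(p^n-1)/2}$ via the parity of $1+p+\cdots+p^{n-1}$ are more carefully justified than the paper's appeal to Stepanov's theorem for the eigenvalue form and to the birational map onto the Montgomery curve for the exceptional-point bookkeeping.
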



\begin{proof}

We consider the extension of the base field $F_p$ to $F_{p^n}$ in order to determine the number of the points on the curve $x^2 + y^2 = 1 + dx^2y^2$. Let $P (x)$ denotes a polynomial with degree $m> 2$ whose coefficients are from ${{\mathbb{F}}_{p}}$. To make the proof, we take into account that it is known \cite{Step} that the number of solutions to $y^2 = P (x)$ over ${{\mathbb{F}}_{{{p}^{n}}}}$ will have the form ${{p}^{n}}+ 1 -\omega _{1}^{n}-...-\omega _{m-1}^{n}$ where ${{\omega }_{1}},...,{{\omega }_{m-1}}\in \mathbb{C}$, $|\omega_i| = p^{\frac {1}{2}}$.

In case of our supersingular curve, if $ n \equiv 1(\bmod \text{ } 2)$ the number of points on projective curve over ${{\mathbb{F}}_{{{p}^{n}}}}$ is determined by the expression ${{p}^{n}}+ 1 -\omega_{1}^{n}-\omega _{2}^{n}$, where $\omega_i^n \in \mathbb{C}$ and $\omega_1 = - \omega_2$, $|\omega_i| = \sqrt{p}$  that's why $\omega_1 = i \sqrt{p}$,  $\omega_2= - i \sqrt{p}$  with $i \in \{1,2\}$. In the general case, it is known \cite{Step, Gla, Lidl} that $|\omega_i| = p^{\frac {1}{2}}$. The order of the projective curve is therefore ${{p}^{n}}+ 1$.

If $p \equiv 7 (\bmod \text{ } 8)$, then it is known from a result of Skuratovskii \cite{SkRMM} that $E_d$ has in its projective closure of the curve singular points which are not affine and therefore
$${{N}_{{{{d [p]}}}}}={{p}^{n}}-3.$$
If $p \equiv 3(\bmod \text{ } 8)$, then there are no singular points, hence $${\overline{N}}_{{{{d [p]}}}} = {{N}_{{{{d [p]}}}}}={{p}^{n}}+1.$$

Consequently the number of points on the Edwards curve depends on $\big(\frac{d}{p}\big)$ and is equal to $N_{d[p]}=p^n-3$ if $p\equiv 7(\bmod \text{ } 8)$ and $N_{d[p]}=p^n+1$ if $p\equiv 3(\bmod \text{ } 8)$ where $n \equiv 1(\bmod \text{ } 2)$. We note that this is because the transformation of (\ref{Prod}) in $E_d$ depends upon the denominator $(dx^2-1)$.

If $n \equiv 1(\bmod \text{ } 2)$ then, with respect to the sum of root of  of the characteristic equation for the Frobenius endomorphism $\omega_{1}^{n}+\omega _{2}^{n}$, which in this case have the same signs, we obtain that the number of points in the group of points of the curve is ${{p}^{n}}+ 1 -\omega_{1}^{n} - \omega_{2}^{n}$ \cite{Del}.


For $ n \equiv 0(\bmod \text{ } 2)$ we always have, that every $d\in F_p$ is a quadratic residue in $F_{p^n}$. Consequently, because of $(\frac{d}{p})=1$ four singular points appear on the curve. Thus, the number of affine points is less by 4, i.e. $N_{d[p^n]} = p^n - 1 - 2 \left( \frac{d}{p} \right) - 2(-p)^\frac{n}{2} = p^n - 3 - 2(-p)^\frac{n}{2}.$
In more details ${{\omega }_{1}},\,\,\,{{\omega }_{2}}$ are eigen values of the Frobenius operator $F$ endomorphism on etale cohomology over the finite field ${{\mathbb{F}}_{{{p}^{n}}}}$, where $F$ acts of ${{H}^{i}}(X)$. The number of points, in general case, are determined by Lefshitz formula:
$$\#X\left( {{\text{F}}_{{{p}^{n}}}} \right)=\sum\limits_{{}}{{{(-1)}^{i}}tr({{F}^{n}}\left| {{H}^{i}}(X) \right.)}$$
where $\#X\left( {{\text{F}}_{{{p}^{n}}}} \right)$ is a number of points in the manifold $X$ over ${{\mathbb{F}}_{{{p}^{n}}}}$,  ${{F}^{n}}$ is composition of Frobenius operator. In our case, ${{E}_{d}}$ is considered as the manifold $X$ over ${{\mathbb{F}}_{{{p}^{n}}}}$.

\begin{lem}
 There exists birational isomorphism between ${{E}_{d}}$ and ${{E}_{M}}$, which is determined by correspondent mappings $x=\frac{1+u}{1-u}$ and $y=\frac{2u}{v}$.
\end{lem}

\begin{proof} To verify this statement in supersingular case
 we suppose that the curve \[{{x}^{2}}+{{y}^{2}}=1+d{{x}^{2}}{{y}^{2}}\] contains $p-1-2\left( \frac{d}{p} \right)$ points $(x, y)$, with coordinates over prime field ${{F}_{p}}$.  Consider the transformation of the curve ${{x}^{2}}+{{y}^{2}}=1+d{{x}^{2}}{{y}^{2}}$ into the following form ${{y}^{2}}(d{{x}^{2}}-1)={{x}^{2}}-1$.  Make the substitutions $x=\frac{1+u}{1-u}$ and $y=\frac{2u}{v}.$
 We will call the special points of this transformations the point in which these transformations or inverse transformations are not determined.
 As a result the equation of curve the equation of the curve takes the form \[\frac{4{{u}^{2}}}{{{v}^{2}}}\cdot \frac{(d-1){{u}^{2}}+2(d+1)u+(d-1)}{{{(1-u)}^{2}}}=\frac{4u}{{{(1-u)}^{2}}}.\]
 Multiply the equation of the curve by \[\frac{{{v}^{2}}{{(1-u)}^{2}}}{4u}.\]
  As a result of the reduction, we obtain the equation \[{{v}^{2}}=(d-1){{u}^{3}}+2(d+1){{u}^{2}}+(d-1)u.\] We analyze what new solutions appeared in the resulting equation in comparing with ${{y}^{2}}(d{{x}^{2}}-1)={{x}^{2}}-1$.

First, there is an additional solution $(u, v) = (0, 0)$.
Second, if $ d $ is a quadratic residue by modulo $ p $, then the solutions appear

\[({{u}_{1}},{{v}_{1}})=\left( \frac{-(d+1)-2\sqrt{d}}{d-1},\text{ }0 \right),\,\,\,\,({{u}_{2}},{{v}_{2}})=\left( \frac{-(d+1)+2\sqrt{d}}{d-1},\text{ }0 \right).\]

If $\left( \frac{d}{p} \right)=-1$ then as it was shown above the order of $E_d$ is equal to $p+1$.
Therefore, in case $\left( \frac{d}{p} \right)=-1$ order of $E_d$ appears one additional solution of from $(u, 0)$ more exact it is point with coordinates $\left( 0,\,\,0 \right)$ also two points $((-1;0), (1;0))$ of $E_d$ have not images on $E_M$ in result of action of birational map on $E_M$. Thus, in this case, number of affine points on $E_M$ is equal to $p+1-2+1=p$.
The table of correspondence between points is the following.


\begin{table}[h!]

	\begin{center}
	\begin{tabular}{ |c|c| }
	\hline
	Special points of $E_M$ & Special points of $E_d$ \\
	\hline
	\hline
	$(0,0)$ & - \\
	\hline
	$(\frac {-\left( d+1\right) -2\sqrt {d}}{d-1},0)$ & - \\
	\hline
	$(\frac {-\left( d+1\right) + 2\sqrt {d}}{d-1},0)$ & - \\
	\hline
	$(1, -2\sqrt{d})$ & - \\
	\hline
	$(1, 2\sqrt{d})$ & - \\
	\hline
	- & $(-1, 0)$ \\
	\hline
	- & $(1, 0)$ \\
	\hline
	\end{tabular}
	\caption{Special points of birational maping}
	\end{center}
	\end{table}


If $x= -1$ then equality $x = \frac{1+u}{1-u}$ transforms to form $-1+u=1+u$, or $-1=1$ that is impossible for $p>2$.
therefore point $(-1, 0)$ have not an image on $E_M$.

Consider the case $x=1$. As a result of the substitutions $x = (1 + u)/(1-u),   y = 2u /v$
   we get the pair $(x, y)$ corresponding to the pair
$(u, v)$ for which $v^2 = (d-1) u ^ 3 + 2 (d + 1) u ^ 2 + (d-1) u$.

      If it occurs  that $y=0$, then the preimage having coordinates $u=0$ and $v$  is not equal to 0 is suitable for the birational map $y=\frac{2u}{v}$ which implies that  $u=0$  and $v\ne 0$.
But  pair $(u, v)$ of such form do not satisfies the equation of obtained in result of mapping equation of Montgomery curve  ${{v}^{2}}=\left( d-1 \right){{u}^{3}}+2\left( d+1 \right){{u}^{2}}+(d-1)u$.
Therefore the corresponding point $(u, v)$ will not be a solution to the equation
$v ^ 2 = (d-1) u ^ 3 + 2 (d + 1) u ^ 2 + (d-1) u$, since there will be an element on the left side,
different from 0, and on the right will be 0. That is a contradiction as required, therefore $(x,y)=(1, 0)$ is the special point having not image on $E_M$.

If $y= 0$ then in equality $y = \frac{2u}{v}$ appear zeros in  numerator and denominator and transformation is not correct. 

The points $ (\frac {-\left( d+1\right) -2\sqrt {d}}{d-1},0)$, $(\frac {-\left( d+1\right) + 2\sqrt {d}}{d-1},0)$,
$(1, -2\sqrt{d})$, $(1, 2\sqrt{d})$ exist on $E_M$ only when $(\frac{d}{p})=1$. These points are elements of group which can be presented on Riemann sphere over $F_q$. The points $(1, -2\sqrt{d})$, $(1, 2\sqrt{d})$ have not images on $E_d$ because of in denominator of transformations $x = \frac{1+u}{1-u}$ appears zero. By the same reason points $(\frac {-\left( d+1\right) -2\sqrt {d}}{d-1},0)$, $(\frac {-\left( d+1\right) + 2\sqrt {d}}{d-1},0)$ have not an images on $E_d$.




If $\left( \frac{d}{p} \right)=1$ then as it was shown above the order of $E_d$ is equal to $p-3$. Therefore
order of $E_M$ is equal to $p$ because of 5  additional solutions of equation of $E_M$ appears but 2 points $((-1;0), (1;0))$ of $E_d$ have not images on $E_M$.
These are 5 additional points appointed in tableau above. Also it exists one infinitely distant point on an Montgomery curve. Thus, the order of $E_M$ is equal $p+1$ in this case as supersingular curve has.
\end{proof}

It should be noted that the supersingular curve $E_d$ is birationally equivalent to the supersingular elliptic curve which may be presented in Montgomery form ${{v}^{2}}=(d-1){{u}^{3}}+2(d+1){{u}^{2}}+(d-1)u$. As well as exceptional points \cite{Bir} for the birational equivalence $(u, v)\mapsto (2u/v, (u-1)/(u+1))= (x, y)$ are in one to one correspondence to the affine point of order 2 on $E_{d}$ and to the points in projective closure of $E_d$. Since the formula for number of affine  points on $E_M$ can be applied to counting $N_{d[p]}$. In such way we apply this result \cite{Step, Mon87}, to the case $y^2 = P (x)$, where $degP(x)=m$, $m=3$. 
The order $N_{M[p^n]}$ of the curve $E_M$ over $F_{p^k}$ can be evaluated due to Stepanov \cite{Step, Gla}. The research tells us that the order is $\overline{N}_{M[p^n]} = p^n + 1- \omega_1^n- \omega_2^n$, where $\omega_i^n \in \mathbb{C}$ and $\omega_1^n = - \omega_2^n$, $|\omega_i| = \sqrt{p}$ with $i \in \{1,2\}$. Therefore, we conclude when $n \equiv 1 (\bmod \text{ } 2)$, we know the order of Montgomery curve is precisely $N_{M[p^n]} = p^n+1$.
  This result leads us to the conclusion that the number of solutions of ${{x}^{2}}+{{y}^{2}}=1+d{{x}^{2}}{{y}^{2}}$ as well as ${{v}^{2}}=(d-1){{u}^{3}}+2(d+1){{u}^{2}}+(d-1)u$ over the finite field ${{\mathbb{F}}_{{{p}^{n}}}}$ are determined by the expression ${{p}^{n}} + 1 - \omega_{1}^{n} - \omega _{2}^{n}$ if $ n \equiv 1(\bmod \text{ } 2)$.
\end{proof}

\begin{exam}
The elliptic curve presented in the form of Montgomery $E_M : {{v}^{2}}={{u}^{3}}+6{{u}^{2}}+u$, is birationally equivalent \cite{Bir} to the curve ${{x}^{2}}+{{y}^{2}}=1+{{2}}{{x}^{2}}{{y}^{2}}$ over the field $F_{p^k}$.
\end{exam}

\begin{cor}\label{corext}
If $d=2$, $n \equiv 1(\bmod \text{ } 2)$ and $p \equiv 3(\bmod \text{ } 8)$, then the order of curve $E_d$ and order of the projective curve are the following: $$N_{d[p^n]} = p^n + 1,  \, \overline{N}_{d[p^n]} = p^n + 1.$$


If $d=2$, $n \equiv 1(\bmod \text{ } 2)$ and $p \equiv 7(\bmod \text{ } 8)$, then the number of points of projective curve is $$\overline{N}_{d[p^n]} = p^n + 1,$$
and the number of points on affine curve $E_d$ is also $$N_{d[p^n]} = p^n - 3.$$

In case $d=2$, $n \equiv 0(\bmod \text{ } 2)$, $p \equiv 3(\bmod \text{ } 4)$, the general formula of the curves order is $${N}_{d[p^n]} = p^{n} - 3  -2(-p)^{\frac{n}{2}}.$$


The general formula for $n \equiv 0(\bmod \text{ } 2)$ and $d=2$ for the number of points on projective curve for the supersingular case is $$\overline{N}_{d[p^n]} = p^n + 1-2 (-p)^{\frac{n}{2}}.$$
\end{cor}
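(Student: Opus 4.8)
The plan is to derive Corollary~\ref{corext} as a direct specialisation of Theorem~\ref{extnumb} to the value $d=2$, the only genuinely new ingredient being the evaluation of the Legendre symbol $\left(\frac{2}{p}\right)$ in terms of the residue of $p$ modulo $8$. First I would confirm that the hypothesis of Theorem~\ref{extnumb} is met, namely that $d=2$ satisfies the supersingularity condition~\eqref{Super}. This is precisely the content of the theorem asserting that $d=2$ and $d=2^{-1}$ satisfy~\eqref{Super} whenever $p\equiv 3\pmod 4$, where it is shown that $\sum\limits_{j=0}^{\frac{p-1}{2}}\left(C_{\frac{p-1}{2}}^{j}\right)^{2}2^{j}\equiv 0\pmod p$. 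Consequently, for $d=2$ every order formula of Theorem~\ref{extnumb} is available with no additional computation, and by Corollary~\ref{supersingpair} the same conclusions will hold for $d=2^{-1}$.

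The decisive step is the second supplementary law of quadratic reciprocity, $\left(\frac{2}{p}\right)=(-1)^{\frac{p^{2}-1}{8}}$, which inside the range $p\equiv 3\pmod 4$ separates into the two subcases $\left(\frac{2}{p}\right)=-1$ for $p\equiv 3\pmod 8$ and $\left(\frac{2}{p}\right)=+1$ for $p\equiv 7\pmod 8$. For odd $n$ I would substitute these two values into the formula $N_{d[p^{n}]}=p^{n}-1-2\left(\frac{d}{p}\right)$ of Theorem~\ref{extnumb}: the subcase $p\equiv 3\pmod 8$ then yields $N_{2[p^{n}]}=p^{n}-1+2=p^{n}+1$, while the subcase $p\equiv 7\pmod 8$ yields $N_{2[p^{n}]}=p^{n}-1-2=p^{n}-3$. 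Since for odd $n$ the projective order is $\overline{N}_{d[p^{n}]}=p^{n}+1$ independently of the symbol, this establishes the first two displayed assertions; the discrepancy $\overline{N}_{d[p^{n}]}-N_{d[p^{n}]}=4$ in the second subcase is exactly the four singular points that appear in the projective closure when $\left(\frac{2}{p}\right)=1$, as recorded within the proof of Theorem~\ref{extnumb}.

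For even $n$ no case distinction is required, since every element of $F_{p}$, and in particular $2$, is a quadratic residue in $F_{p^{n}}$, so the dependence on $\left(\frac{d}{p}\right)$ vanishes and the even-exponent formulas of Theorem~\ref{extnumb} apply verbatim, giving $N_{2[p^{n}]}=p^{n}-3-2(-p)^{\frac{n}{2}}$ and $\overline{N}_{2[p^{n}]}=p^{n}+1-2(-p)^{\frac{n}{2}}$. I anticipate no serious obstacle: the single point demanding care is the bookkeeping that matches each residue class of $p$ modulo $8$ to the correct value of $\left(\frac{2}{p}\right)$, and thereby to the presence or absence of the four extra singular points that distinguish $N_{d[p^{n}]}$ from $\overline{N}_{d[p^{n}]}$.
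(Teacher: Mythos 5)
Your proposal is correct, and it is in fact more economical than the route the paper itself takes. You obtain the corollary as a pure specialisation: the hypothesis of Theorem~\ref{extnumb} is discharged by the theorem asserting $\sum_{j=0}^{\frac{p-1}{2}}{{(C_{\frac{p-1}{2}}^{j})}^{2}}{{2}^{j}}\equiv 0\pmod p$ for $p\equiv 3\pmod 4$, the mod-$8$ casework is made explicit via the second supplementary law $\left(\frac{2}{p}\right)=(-1)^{\frac{p^{2}-1}{8}}$, and the four displayed formulas then drop out by substitution into $N_{d[p^n]}=p^n-1-2\left(\frac{d}{p}\right)$ (odd $n$) and the even-$n$ formulas. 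The paper's own proof instead re-runs part of the machinery behind Theorem~\ref{extnumb}: it passes to the birationally equivalent Montgomery curve $E_M$, writes $N_{M[p^n]}=p^n+1-\omega_1^n-\omega_2^n$ with $\omega_1=-\omega_2$ and $|\omega_i|=\sqrt{p}$, and argues that $\omega_1^n+\omega_2^n$ vanishes for odd $n$ while contributing $2(-p)^{\frac{n}{2}}$ for even $n$; the evaluation $\left(\frac{2}{p}\right)=-1$ for $p\equiv 3\pmod 8$ versus $\left(\frac{2}{p}\right)=1$ for $p\equiv 7\pmod 8$ is never written down there, being implicit in the mod-$8$ split of singular points already asserted inside the proof of Theorem~\ref{extnumb}. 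What your approach buys is modularity and an explicit justification of precisely the step the paper leaves tacit --- why $p\bmod 8$ governs the count when $d=2$, and why the discrepancy $\overline{N}_{d[p^n]}-N_{d[p^n]}=4$ tracks $\left(\frac{2}{p}\right)=1$; what the paper's eigenvalue recomputation buys is a self-contained re-verification of the term $2(-p)^{\frac{n}{2}}$ independent of the theorem's statement. Both derivations land on the same formulas, and yours contains no gap.
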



\begin{proof}
We denote by $N_{M[p^n]}$ the order of the curve $E_M$ over $F_{p^n}$. The order $N_{M[p^n]}$ of $E_M$ over $F_{p^n}$ can be evaluated \cite{Lidl} as $N_{M[p^n]} = p^n +1 - \omega_1^n - \omega_2^n$, where $\omega_i^n \in \mathbb{C}$ and $\omega_1^n = - \omega_2^n$, $|\omega_i|= \sqrt{p}$ with $i\in \{1,2\}$. For the finite algebraic extension of degree $n$, we will consider ${{p}^{n}}-\omega _{1}^{n}-\omega _{2}^{n}={{p}^{n}}$ if $ n \equiv 1(\bmod \text{ } 2)$. Therefore, for $n\equiv 1 (\bmod 2)$, the order of the Montgomery curve is precisely given by $N_{M[p^n]} = p^n +1$. Here's one infinitely remote point as a neutral element of the group of points of the curve.

Considering now an elliptic curve, we have ${{\omega }_{1}}={{\bar{\omega }}_{2}}$ by \cite{Kob}, which leads to ${{\omega }_{1}}+{{\omega }_{2}}=0$. For $n=1$, it is clear that ${{N}_{M}}=p$. When $n$ is odd, we have $\omega_1^n + \omega_2^n=0$ and therefore ${{N}_{M,n}}={{p}^{n}}+1$. Because $n$ is even by initial assumption, we shall show that ${{N}_{M[p^n]}}={{p}^{n}}+1-2{{(-p)}^{\frac{n}{2}}}$ holds as required.
\end{proof}

Note that for $n=2$ we can express the number as ${{\overline{N}}_{{{d[p^2]}}}}={{p}^{2}}+1+2p={{\left( p+1 \right)}^{2}}$ with respect to Lagrange theorem have to be divisible on ${{\bar{N}}_{{d[p]}}}$.  Because a  group of ${{E}_{d}}({{F}_{{{p}^{2}}}})$ over square extension of ${{F}_{p}}$  contains the group ${{E}_{d}}({{\text{F}}_{p}})$ as a proper subgroup. In fact, according to Theorem 1 the order ${{E}_{d}}({{\text{F}}_{p}})$ is $p+1$ therefore divisibility of order ${{E}_{d}}({{\text{F}}_{{{p}^{2}}}})$ holds because in our case $p=7$ thus ${{\overline{N}}_{{{E}_{d}}}}={8}^{2}$ and  $p+1=8=N_{d[7]}$ \cite{Var}.

The following two examples exemplify Corollary \ref{corext}.

\begin{exam}
If $p \equiv 3(\bmod \text{ } 8)$ and $n=2k$ then we have when $d=2$, $n=2$, $p=3$ that the number of affine points equals to $${{N}_{2[3]}}={{p}^{n}} -3 -2{{(-p)}^{\frac{n}{2}}} = 3^2 -3 -2 \cdot(-3) = 12,$$
and the number of projective points is equal to $${{\overline{N}}_{2[3]}}={{p}^{n}} + 1 -2{{(-p)}^{\frac{n}{2}}} = 3^2 + 1 - 2 \cdot (-3) = 16.$$
\end{exam}

\begin{exam}
If $p \equiv 7(\bmod \text{ } 8)$ and $n=2k$ then we have when $d=2$, $n=2$, $p=7$ that the number of affine points equals to $${{N}_{2[7]}}={{p}^{n}} -3 -2{{(-p)}^{\frac{n}{2}}} = 7^2 - 3 -2 \cdot(-7) = 60,$$
and the number of projective points is equal to $${{\overline{N}}_{2[7]}}={{p}^{n}} + 1 -2{{(-p)}^{\frac{n}{2}}} = 7^2 + 1 - 2 \cdot (-7) = 64.$$
\end{exam}

\begin{prop}
The group of points of the supersingular curve $E_d$ contains $p-1 - 2\left(\frac{d}{p}\right)$ affine points and the affine singular points whose number is $2\left(\frac{d}{p}\right)+2$.  
\end{prop}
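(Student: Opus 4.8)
The plan is to obtain the statement as a bookkeeping identity between the two point-counts already established, and then to exhibit the singular points explicitly. First I would invoke Theorem \ref{first result}, which states directly that under the hypotheses $p \equiv 3 \pmod 4$ and the supersingularity condition \eqref{Super}, the curve $E_d$ has $p - 1 - 2\left(\frac{d}{p}\right)$ affine points and that the group of points completed by the singular points has $p+1$ points. This already gives the first quantity claimed for free, so no new computation is needed for the affine part; the entire content to be supplied is the identification and enumeration of the singular points.

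Next I would account for the singular points by comparing the affine count with the completed count. By Corollary \ref{cor result} the projective order is $\overline{N}_{d[p]} = p+1$ in both cases $\left(\frac{d}{p}\right) = \pm 1$. Since the completed group has $\overline{N}_{d[p]} = p+1$ elements and the smooth affine part has $N_{d[p]} = p - 1 - 2\left(\frac{d}{p}\right)$ elements, the number of singular points is the difference
\[
\overline{N}_{d[p]} - N_{d[p]} = (p+1) - \Big(p - 1 - 2\left(\tfrac{d}{p}\right)\Big) = 2\left(\tfrac{d}{p}\right) + 2,
\]
which is exactly the second quantity claimed. This reduces the whole proposition to the two totals proved earlier.

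To make the identification of these points explicit rather than leaving it as a mere cardinality comparison, I would return to the rational transformation used in the proof of Theorem \ref{first result}, namely $y^2 = \frac{1-x^2}{1-dx^2}$, and locate its degeneracy locus $1 - dx^2 = 0$. This locus is nonempty precisely when $d$ is a quadratic residue, i.e. when $\left(\frac{d}{p}\right) = 1$, in which case $x = \pm 1/\sqrt{d}$; combined with the $x \leftrightarrow y$ symmetric locus $1 - dy^2 = 0$, these reproduce the four exceptional points recorded in the table of the birational map to $E_M$, matching $2(1)+2 = 4$. When $\left(\frac{d}{p}\right) = -1$ no square root of $d$ exists, the degeneracy locus is empty, and the count is $2(-1)+2 = 0$, again in agreement with the formula. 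I would match these degeneracy points one-to-one against the exceptional points of $E_d \to E_M$ so that the identity $\overline{N}_{d[p]} = N_{d[p]} + \big(2\left(\frac{d}{p}\right)+2\big)$ is realized by an explicit correspondence.

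The main obstacle I anticipate is not the arithmetic, which is immediate, but pinning down precisely which points deserve the name \emph{singular} and checking that the explicit list of $\sqrt{d}$-points is counted consistently with the two points at infinity $(1,0,0)$ and $(0,1,0)$ and with the projective order $\overline{N}_{d[p]}$. Because a naive Jacobian-criterion computation on the affine equation $x^2+y^2-1-dx^2y^2$ produces no affine singular points for $d \neq 1$, the delicate step is to argue that the paper's singular points are exactly the images, in the completed model, of the degeneracy locus of the birational transformation, and that this bookkeeping does not double-count or omit the points at infinity. I would resolve this by invoking the one-to-one correspondence of exceptional points established in the Lemma, which fixes the accounting once and for all.
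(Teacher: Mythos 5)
Your proposal is correct, and it is considerably more self-contained than the paper's own argument: the paper disposes of this proposition in a single sentence, citing \cite{SkRMM} for the discovery of the singular points and remarking only that a curve free of singular points has group order $p+1$, whereas you derive both counts from results already proved inside the paper. In substance you and the paper lean on the same two totals --- the affine count $N_{d[p]} = p-1-2\left(\frac{d}{p}\right)$ and the completed count $p+1$ from Theorem \ref{first result} and Corollary \ref{cor result} --- but you make the subtraction $\left(p+1\right)-\left(p-1-2\left(\frac{d}{p}\right)\right) = 2\left(\frac{d}{p}\right)+2$ explicit and then identify the points concretely via the degeneracy locus $dx^2=1$ (and its $x \leftrightarrow y$ mirror), which the paper outsources entirely to \cite{SkRMM}. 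You are also right to flag, and you correctly resolve, the terminological trap: the affine Edwards curve is nonsingular by the Jacobian criterion for $d\neq 0,1$ (the singularities of the projective closure sit at $(1:0:0)$ and $(0:1:0)$), so the proposition's ``affine singular points'' can only mean the points of the completed (desingularized) model lying over infinity, and these are rational over $\mathbb{F}_p$ precisely when $\left(\frac{d}{p}\right)=1$, giving $4$ or $0$ in agreement with $2\left(\frac{d}{p}\right)+2$. One small refinement: your proposed matching against the table in the Lemma is not literally one-to-one, since on the $E_M$ side the accounting there gains five points (including $(0,0)$) while losing the two points $(\pm 1, 0)$ of $E_d$; the cleanest identification is directly with the two resolved points above each of the two infinite singularities rather than with that table. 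What your route buys is an actual in-paper proof of the enumeration; what the paper's buys is brevity, at the cost of leaving the count unverified within the text.
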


\begin{proof}
The singular points were discovered in \cite{SkRMM} and hence if the curve is free of singular points then the group order is $p+1$.
\end{proof}

\begin{exam}
The number of curve points over finite field when $d=2$ and $p=31$ is equal to ${{N}_{{2[31]}}}={{N}_{{2}^{-1}[31]}}=p-3=28$.
\end{exam}


\begin{thm}
The order of Edwards curve over $F_p$ is congruent to
$${{\overline{N}}_{{{{d [p]}}}}} \equiv \Big(p-1 - 2\left(\frac{d}{p}\right) +{{(-1)}^{\frac{p+1}{2}}}\sum\limits_{j=0}^{\frac{p-1}{2}}{{{\big(C_{\frac{p-1}{2}}^{j}\big)}^{2}}{{d}^{j}}}\Big) \equiv \Big({{(-1)}^{\frac{p+1}{2}}}\sum\limits_{j=0}^{\frac{p-1}{2}}{{{\big(C_{\frac{p-1}{2}}^{j}\big)}^{2}}{{d}^{j}} -1 - 2\left(\frac{d}{p}\right)}\Big)(\bmod \text{ } p ).$$
The true value of ${{\overline{N}}_{{{{d [p]}}}}}$ lies in $[4; 2p]$ and is even.
\end{thm}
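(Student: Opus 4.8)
The plan is to re-run the character-sum computation from the proof of Theorem~\ref{first result}, this time \emph{without} assuming $p\equiv 3\pmod 4$, so that the sign $(-1)^{\frac{p-1}{2}}$ is preserved throughout, and then to transfer the resulting affine count to the projective closure. First I would count the affine Edwards points over $\mathbb{F}_p$ through the auxiliary curve (\ref{Prod}): exactly as in Theorem~\ref{first result}, summing $1+\big((x^2-1)(dx^2-1)\big)^{\frac{p-1}{2}}$ over $x$ and applying Vinogradov's Lemma~\ref{Sum} to $\sum_{x}x^{k}$ leaves only the exponents $k=p-1$ and $k=2p-2$, whence $M_{d[p]}\equiv -a_{2p-2}-a_{p-1}\pmod p$. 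Substituting the two coefficients already identified there, $a_{2p-2}\equiv\left(\frac{d}{p}\right)$ and $a_{p-1}=(-1)^{\frac{p-1}{2}}\sum_{j=0}^{\frac{p-1}{2}}\big(C_{\frac{p-1}{2}}^{j}\big)^{2}d^{j}$, and using $-(-1)^{\frac{p-1}{2}}=(-1)^{\frac{p+1}{2}}$, gives
$$M_{d[p]}\equiv -\left(\tfrac{d}{p}\right)+(-1)^{\frac{p+1}{2}}\sum_{j=0}^{\frac{p-1}{2}}\big(C_{\frac{p-1}{2}}^{j}\big)^{2}d^{j}\pmod p.$$

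Next I would eliminate $M_{d[p]}$ with the relation $M_{d[p]}=N_{d[p]}+\left(\frac{d}{p}\right)+1$ recorded in the proof of Theorem~\ref{first result}, obtaining
$$N_{d[p]}\equiv -1-2\left(\tfrac{d}{p}\right)+(-1)^{\frac{p+1}{2}}\sum_{j=0}^{\frac{p-1}{2}}\big(C_{\frac{p-1}{2}}^{j}\big)^{2}d^{j}\pmod p,$$
which, since $-1\equiv p-1\pmod p$, is precisely the residue in the statement; the two displayed expressions in the theorem coincide modulo $p$ because they differ only by reordering and by the harmless replacement of $p-1$ by $-1$. To obtain $\overline{N}_{d[p]}$ rather than $N_{d[p]}$ I would then add the contribution of the projective closure, i.e.\ of the singular points $(1{:}0{:}0)$ and $(0{:}1{:}0)$ at infinity, using the count $\overline{N}_{d[p]}-N_{d[p]}=2\left(\frac{d}{p}\right)+2$ recorded in the Proposition and Corollary~\ref{cor result}. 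I expect this last bookkeeping to be the main obstacle: one must decide which branches of each node are defined over $\mathbb{F}_p$ (the tangent cone $z^2-dy^2$ splits exactly when $\left(\frac{d}{p}\right)=1$) and align the convention for counting the singular points at infinity with the one used in the affine formula, so that the residue of $\overline{N}_{d[p]}$ agrees with the stated one.

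Finally I would establish the elementary claims about the true (integer) value. For the upper bound, for each $x\in\mathbb{F}_p$ the equation $y^2=(1-x^2)/(1-dx^2)$ has at most two roots $y$, and at the two values $x=\pm1$ it has exactly one root $y=0$; hence the affine curve carries at most $2(p-2)+2=2p-2$ points, and adjoining the two points at infinity gives $\overline{N}_{d[p]}\le 2p$. For the lower bound I would note that $(\pm1,0)$ and $(0,\pm1)$ always satisfy (\ref{Eqw}), so $\overline{N}_{d[p]}\ge 4$. For parity I would use the involution $(x,y)\mapsto(x,-y)$, which preserves both (\ref{Eqw}) and its projective closure; since the cardinality of a finite set is congruent modulo $2$ to the number of fixed points of any involution on it, and the fixed locus here is exactly $\{(\pm1,0)\}$ together with the two points $(1{:}0{:}0),(0{:}1{:}0)$ at infinity — four points in all — it follows that $\overline{N}_{d[p]}\equiv 4\equiv 0\pmod 2$, so $\overline{N}_{d[p]}$ is even.
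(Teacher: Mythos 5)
Your derivation of the congruence is, in substance, the paper's own proof: the paper likewise counts solutions of $y^2=(x^2-1)(dx^2-1)$ via $\sum_{x}\bigl(1+((x^2-1)(dx^2-1))^{\frac{p-1}{2}}\bigr)$, applies Lemma~\ref{Sum} so that only $a_{p-1}$ and $a_{2p-2}$ survive, identifies $a_{2p-2}\equiv\left(\frac{d}{p}\right)\pmod p$ and $a_{p-1}=(-1)^{\frac{p-1}{2}}\sum_{j=0}^{\frac{p-1}{2}}\bigl(C_{\frac{p-1}{2}}^{j}\bigr)^{2}d^{j}$, and subtracts $\left(\frac{d}{p}\right)+1$ for the spurious solutions with $dx^2=1$, arriving at exactly your second display. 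The one point where you go beyond the paper --- adding $\overline{N}_{d[p]}-N_{d[p]}=2\left(\frac{d}{p}\right)+2$ to pass to the projective count --- is a step the paper never performs, and you were right to flag it as the obstacle: carrying it out yields $\overline{N}_{d[p]}\equiv(-1)^{\frac{p+1}{2}}\sum_{j=0}^{\frac{p-1}{2}}\bigl(C_{\frac{p-1}{2}}^{j}\bigr)^{2}d^{j}+1\pmod p$, which differs from the stated residue by $4$ whenever $\left(\frac{d}{p}\right)=1$, and no choice of branch-counting convention repairs this. The resolution is that the displayed residue is the one for the \emph{affine} order: the paper's proof itself concludes with a congruence for $N_{E_d}$, and the symbol $\overline{N}_{d[p]}$ in the statement is used loosely (by Corollary~\ref{cor result} the two orders coincide precisely when $\left(\frac{d}{p}\right)=-1$). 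So you should simply stop after your second display, or record explicitly that for $\left(\frac{d}{p}\right)=1$ the statement as printed is correct for $N_{d[p]}$ and off by $4$ for $\overline{N}_{d[p]}$; attempting to force agreement through the singular-point bookkeeping would fail, through no fault of your computation.

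On the auxiliary claims you are more rigorous than the paper, which merely asserts that the order is at least $4$ ``because the cofactor of this curve is $4$'' and that it is even, parity being used only to choose between the representatives $p$ and $2p$ of the residue class. Your counting argument (at most two $y$ over each $x$, exactly one at $x=\pm1$, plus the two infinite points) and your involution $(x,y)\mapsto(x,-y)$, whose fixed locus $\{(\pm1,0),\,(1{:}0{:}0),\,(0{:}1{:}0)\}$ has four elements, give an actual proof of $4\le\overline{N}_{d[p]}\le 2p$ and of evenness; the parity conclusion moreover survives even if one counts branches over the nodes at infinity separately, since the involution permutes those branches among themselves.
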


 \begin{proof}

This result follows from the number of solutions of the equation $\begin{array}{*{35}{l}}
   {{y}^{2}}=(d{{x}^{2}}-1)({{x}^{2}}-1)  \\
\end{array}$ over ${{F}_{p}}$  which equals to
\begin{align*}
  & \sum\limits_{x=0}^{p-1}{\left( \frac{({{x}^{2}}-1)(d{{x}^{2}}-1)}{p})+1 \right)}\equiv \sum\limits_{x=0}^{p-1}{(\frac{({{x}^{2}}-1)(d{{x}^{2}}-1)}{p}))+p}\equiv
\\
 &
\equiv
(\sum\limits_{j=0}^{\frac{p-1}{2}}{{{({{x}^{2}}-1)}^{\frac{p-1}{2}}}{{(d{{x}^{2}}-1)}^{\frac{p-1}{2}}})}\left( \bmod p \right) \equiv
\\
 &
\equiv ({{(-1)}^{\frac{p+1}{2}}}\sum\limits_{j=0}^{\frac{p-1}{2}}{{{(C_{\frac{p-1}{2}}^{j})}^{2}}{{d}^{j}} -(\frac{d}{p}) )}\left( \bmod p \right). \\
\end{align*}

 The quantity of solutions for ${{x}^{2}}+{{y}^{2}}=1+d{{x}^{2}}{{y}^{2}}$  differs from the quantity  of $    {{y}^{2}}=(d{{x}^{2}}-1)({{x}^{2}}-1)$  by  $(\frac{d}{p})+1$  due to new solutions in the from $(\sqrt{d},0),\,\,(-\sqrt{d},\,0)$.   So this quantity is such
\begin{align*}
  & \sum\limits_{x=0}^{p-1}{\left( \frac{({{x}^{2}}-1)(d{{x}^{2}}-1)}{p})+1 \right)}-\left( (\frac{d}{p})+1 \right)\equiv
\\
 &
 \sum\limits_{x=0}^{p-1}{(\frac{({{x}^{2}}-1)(d{{x}^{2}}-1)}{p}))+p- \left( (\frac{d}{p}) - 1 \right)}\equiv
\\
 &
  \equiv
 (\sum\limits_{j=0}^{\frac{p-1}{2}}{{{({{x}^{2}}-1)}^{\frac{p-1}{2}}}{{(d{{x}^{2}}-1)}^{\frac{p-1}{2}}}-(\frac{d}{p})+1)}\left( \bmod p \right)
 \equiv
\\
 &
\equiv {{(-1)}^{\frac{p+1}{2}}}\sum\limits_{j=0}^{\frac{p-1}{2}}{{{(C_{\frac{p-1}{2}}^{j})}^{2}}{{d}^{j}}-(2(\frac{d}{p})+1)}\left( \bmod p \right). \\
\end{align*}
 According to Lemma 1 the last sum $(\sum\limits_{j=0}^{\frac{p-1}{2}}{{{({{x}^{2}}-1)}^{\frac{p-1}{2}}}{{(d{{x}^{2}}-1)}^{\frac{p-1}{2}}})}\left( \bmod p \right)$ is congruent to
$-{{a}_{p-1}}-{{a}_{2p-2}}(mod\,\,p)$, where ${{a}_{i}}$ are the coefficients from presentation  \[{{({{x}^{2}}-1)}^{\frac{p-1}{2}}}{{(d{{x}^{2}}-1)}^{\frac{p-1}{2}}}={{a}_{0}}+{{a}_{1}}x+...+{{a}_{2p-2}}{{x}^{2p-2}}.\]
Last presentation was obtained due to transformation ${{({{x}^{2}}-1)}^{\frac{p-1}{2}}}{{(d{{x}^{2}}-1)}^{\frac{p-1}{2}}}= (\sum\limits_{x=0}^{p-1}{C_{\frac{p-1}{2}}^{k}{{x}^{2k}}{{(-1)}^{\frac{p-1}{2}-k}})\,}(\sum\limits_{x=0}^{p-1}{C_{\frac{p-1}{2}}^{j}{{d}^{j}}{{x}^{2j}}{{(-1)}^{\frac{p-1}{2}-j}})}$.
Therefore ${{a}_{2p-2}}$ is equal to ${{d}^{\frac{p-1}{2}}}\equiv (\frac{d}{p})(\bmod p)$ and  ${{a}_{p-1}}=\sum\nolimits_{j=0}^{\frac{p-1}{2}}{{{(C_{\frac{p-1}{2}}^{j})}^{2}}{{d}^{j}}{{(-1)}^{\frac{p-1}{2}}}}$.

According to Newton's binomial formula ${{a}_{p-1}}$ equal to the coefficient at ${{x}^{p-1}}$ in the product of two brackets and when substituting it $d$ instead of 2  is such

\[{{(-1)}^{\frac{p-1}{2}}}\sum\limits_{j=0}^{\frac{p-1}{2}}{{{d}^{j}}{{(C_{\frac{p-1}{2}}^{j})}^{2}}},\] that is, it has the form of the polynomial with inverse order of coefficients.

Indeed, we have equality
\begin{align*}
  & \sum\limits_{j=0}^{\frac{p-1}{2}}{{{d}^{j}}(C_{\frac{p-1}{2}}^{\frac{p-1}{2}-j}){{(-1)}^{\frac{p-1}{2}-(\frac{p-1}{2}-j)}}\cdot {{(C_{\frac{p-1}{2}}^{j})}^{2}}{{(-1)}^{\frac{p-1}{2}-j}}}=
  \\
 &
  ={{(-1)}^{\frac{p-1}{2}}}\sum\limits_{j=0}^{\frac{p-1}{2}}{{{d}^{j}}C_{\frac{p-1}{2}}^{\frac{p-1}{2}-j}\cdot C_{\frac{p-1}{2}}^{j}=}{{(-1)}^{\frac{p-1}{2}}}\sum\limits_{j=0}^{\frac{p-1}{2}}{{{d}^{j}}{{(C_{\frac{p-1}{2}}^{j})}^{2}}}. \\
 &  \\
\end{align*}
In form of a sum it is the following
$   \sum\limits_{j=0}^{\frac{p-1}{2}}{{{2}^{j}}(C_{\frac{p-1}{2}}^{\frac{p-1}{2}-j}){{(-1)}^{\frac{p-1}{2}-(\frac{p-1}{2}-j)}}\cdot {{2}^{j}}{{(C_{\frac{p-1}{2}}^{j})}^{2}}{{(-1)}^{\frac{p-1}{2}-j}}}=$ \\
 $={{(-1)}^{\frac{p-1}{2}}}\sum\limits_{j=0}^{\frac{p-1}{2}}{{{2}^{j}}C_{\frac{p-1}{2}}^{\frac{p-1}{2}-j}\cdot C_{\frac{p-1}{2}}^{j}=}{{(-1)}^{\frac{p-1}{2}}}\sum\limits_{j=0}^{\frac{p-1}{2}}{{{2}^{j}}{{(C_{\frac{p-1}{2}}^{j})}^{2}}}.
$

If \[{{a}_{p-1}}=\sum\nolimits_{j=0}^{\frac{p-1}{2}}{{{(C_{\frac{p-1}{2}}^{j})}^{2}}{{d}^{j}}{{(-1)}^{\frac{p-1}{2}}}}\equiv 0(\bmod p),\]
 then as it is was shown by the author in \cite{SkRMM} this curve is supersingular and the number of solutions of the $\begin{array}{*{35}{l}}
   {{y}^{2}}=(d{{x}^{2}}-1)({{x}^{2}}-1)  \\
\end{array}$ over ${{F}_{p}}$  equals to $p-1-2\left( \frac{d}{p} \right)+\left( 1+(\frac{d}{p}) \right)=p-\left( \frac{d}{p} \right)$ and differs from the quantity  of solutions of  $x^2+{y}^{2}= 1 + d{{x}^{2}}{{y}^{2}}$  by  $(\frac{d}{p})+1$  due to new solutions of ${{y}^{2}}=(d{{x}^{2}}-1)({{x}^{2}}-1)$. Thus, in general case if ${{a}_{p-1}}=\sum\nolimits_{j=0}^{\frac{p-1}{2}}{{{(C_{\frac{p-1}{2}}^{j})}^{2}}{{d}^{j}}{{(-1)}^{\frac{p-1}{2}}}}\ne 0$ we have

${{N}_{{{E}_{d}}}}=(p-(\frac{d}{p})-((\frac{d}{p})+1) -{{(-1)}^{\frac{p-1}{2}}}\sum\limits_{j=0}^{\frac{p-1}{2}}{{{(C_{\frac{p-1}{2}}^{\frac{p-1}{2}-j}C_{\frac{p-1}{2}}^{j})}^{2}}{{d}^{j}})}\equiv
 (p-1-{{(-1)}^{\frac{p-1}{2}}}\sum\limits_{j=0}^{\frac{p-1}{2}}{{{(C_{\frac{p-1}{2}}^{j})}^{2}}{{d}^{j}} -2(\frac{d}{p}))}\equiv
({{(-1)}^{\frac{p+1}{2}}}\sum\limits_{j=0}^{\frac{p-1}{2}}{{{(C_{\frac{p-1}{2}}^{j})}^{2}}{{d}^{j}}  -1 - 2(\frac{d}{p}) )}\left( \bmod p \right).$

The exact order is not less than 4 because cofactor of this curve is 4. To determine the order is uniquely enough to take into account that $p$ and $2p$ have different parity. Taking into account that the order is even we chose a term $p$ or $2p$, for the sum which define the order.
 \end{proof}

\begin{exam}
Number of curve points for $d=2$ and $p=31$ equals ${N}_{{{2}[p]}}={N}_{{2}^{-1}[p]}=p-3=28$.
\end{exam}

 \begin{thm}
If  $\left( \frac{d}{p} \right)=1$, then the orders of the curves ${{E}_{d}}$ and ${{E}_{{{d}^{-1}}}}$, satisfies to the following relation
\[~\left| {{E}_{d}} \right|=~\left| {{E}_{{{d}^{-1}}}} \right|.\]
If $\left( \frac{d}{p} \right)=-1$, then ${{E}_{d}}$ and ${{E}_{{{d}^{-1}}}}$ are pair of twisted curves i.e.
  orders of curves ${{E}_{d}}$ and  ${{E}_{{{d}^{-1}}}}$ satisfies to the following relation of duality
\[~\left| {{E}_{d}} \right|+\left| {{E}_{{{d}^{-1}}}} \right|=2p+2.\]

  \end{thm}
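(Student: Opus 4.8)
The plan is to reduce the whole statement to a single symmetry of the supersingularity sum $S(d):=\sum_{j=0}^{\frac{p-1}{2}}\big(C_{\frac{p-1}{2}}^{j}\big)^{2}d^{j}$ under $d\mapsto d^{-1}$, and then feed that symmetry into the exact order formula already established. Writing $m=\frac{p-1}{2}$ and reindexing $j\mapsto m-j$, I would first observe
\[
d^{m}S(d^{-1})=\sum_{j=0}^{m}\big(C_{m}^{j}\big)^{2}d^{\,m-j}=\sum_{i=0}^{m}\big(C_{m}^{m-i}\big)^{2}d^{\,i}=S(d),
\]
using only $C_{m}^{m-i}=C_{m}^{i}$. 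Since Euler's criterion gives $d^{m}=d^{\frac{p-1}{2}}\equiv\left(\frac{d}{p}\right)\pmod p$, this yields the key congruence $S(d)\equiv\left(\frac{d}{p}\right)S(d^{-1})\pmod p$, which is the engine of the whole proof.

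Next I would invoke the counting result proved above, writing the order as $N_{d[p]}=p-1-2\left(\frac{d}{p}\right)+T_{d}$, where $T_{d}$ is the integer fixed by the residue condition $T_{d}\equiv(-1)^{\frac{p+1}{2}}S(d)\pmod p$ together with the Hasse--Weil bound $|T_{d}|<2\sqrt{p}$ (recall $g=1$). The crucial point is uniqueness: for $p\ge 17$ an integer lying in the open interval $(-2\sqrt{p},2\sqrt{p})$ is determined by its residue modulo $p$, because two such integers differ by less than $4\sqrt{p}<p$; the finitely many primes $p<17$ with $p\equiv 3\ (\bmod\ 4)$ I would dispatch by hand. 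Because the common sign $(-1)^{\frac{p+1}{2}}$ occurs identically in $T_{d}$ and $T_{d^{-1}}$, it cancels, and the congruence above propagates to the traces as $T_{d}\equiv\left(\frac{d}{p}\right)T_{d^{-1}}\pmod p$; uniqueness then upgrades this to the exact identity $T_{d}=\left(\frac{d}{p}\right)T_{d^{-1}}$.

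I would then split on the Legendre symbol, using $\left(\frac{d^{-1}}{p}\right)=\left(\frac{d}{p}\right)$ throughout. If $\left(\frac{d}{p}\right)=1$ then $T_{d}=T_{d^{-1}}$, and since both curves carry the same symbol value, $N_{d[p]}=N_{d^{-1}[p]}$, i.e.\ $\left|E_{d}\right|=\left|E_{d^{-1}}\right|$. If $\left(\frac{d}{p}\right)=-1$ then $T_{d}=-T_{d^{-1}}$, whence
\[
\left|E_{d}\right|+\left|E_{d^{-1}}\right|=2(p-1)-4\left(\frac{d}{p}\right)+(T_{d}+T_{d^{-1}})=2(p-1)+4=2p+2,
\]
which is the asserted duality. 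Conceptually this is just the classical quadratic-twist relation: the correspondence $d\mapsto d^{-1}$ is the torsion map $(x,y)\mapsto(x,1/y)$ recalled in the introduction, an isomorphism over $\mathbb{F}_{p}$ exactly when $d$ is a square and a genuine twist when it is not.

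The part I expect to require the most care is not the algebra but the interpretation of ``order'': $E_{d}$ acquires four affine singular points precisely when $\left(\frac{d}{p}\right)=1$, so the equalities must be read on the smooth group model. I would route them through the birationally equivalent Montgomery curve $E_{M}$ of the Lemma, whose order obeys the clean relation $N_{M[p]}=p+1-\omega_{1}-\omega_{2}$ with $\omega_{1}=-\bar\omega_{2}$, and then confirm that the singular-point correction $-2\left(\frac{d}{p}\right)$ is the same for $E_{d}$ and $E_{d^{-1}}$ (guaranteed by $\left(\frac{d^{-1}}{p}\right)=\left(\frac{d}{p}\right)$). This is the one bookkeeping step that must be checked so that the trace identity transfers cleanly from the elliptic model back to the affine orders $\left|E_{d}\right|$ and $\left|E_{d^{-1}}\right|$.
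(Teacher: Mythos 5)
Your proposal is correct in substance, but it takes a genuinely different route from the paper's own proof. The paper argues directly and elementarily: writing $y^{2}\equiv\frac{x^{2}-1}{dx^{2}-1}\ (\bmod\ p)$ for $E_d$ and $y^{2}\equiv\frac{x^{2}-1}{d^{-1}x^{2}-1}\ (\bmod\ p)$ for $E_{d^{-1}}$, it shows that when $\left(\frac{d}{p}\right)=1$ the substitution $(x_{0},y_{0})\mapsto\left(\frac{1}{x_{0}},\frac{y_{0}}{\sqrt{d}}\right)$ carries solutions of one equation bijectively to solutions of the other, giving $\left|E_{d}\right|=\left|E_{d^{-1}}\right|$ outright; and when $\left(\frac{d}{p}\right)=-1$ it uses multiplicativity of the Legendre symbol to get $\left(\frac{(x^{2}-1)/(d^{-1}x^{2}-1)}{p}\right)=-\left(\frac{(x^{-2}-1)/(dx^{-2}-1)}{p}\right)$, so each $x_{0}\neq 0$ contributes $2$ solutions to one curve exactly when $x_{0}^{-1}$ contributes $0$ to the other (and $1\leftrightarrow 1$); summing over $x\in\{1,\dots,p-1\}$ yields $2p-2$ joint solutions, and the four points $(0,\pm 1)$ on the two curves complete $2p+2$. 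That argument is uniform in $p$, requires no trace bounds, and never touches supersingularity. Your route instead pushes the palindromic symmetry $d^{\frac{p-1}{2}}S(d^{-1})=S(d)$ of the sum $S(d)=\sum_{j=0}^{\frac{p-1}{2}}\big(C_{\frac{p-1}{2}}^{j}\big)^{2}d^{j}$ through the congruence for $N_{d[p]}$ and the Hasse window to upgrade $T_{d}\equiv\left(\frac{d}{p}\right)T_{d^{-1}}\ (\bmod\ p)$ to an exact identity. This is legitimate and arguably more conceptual --- it exhibits the twist duality as the image of the $j\mapsto\frac{p-1}{2}-j$ symmetry of the supersingularity sum --- but it buys this at the cost of heavier inputs (Hasse--Weil, the exact-order corollary, and the birational bookkeeping you rightly flag).

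One concrete repair is needed in your uniqueness step. The theorem carries no hypothesis $p\equiv 3\ (\bmod\ 4)$ (the paper's own verifying example uses $p=13$), yet you restrict the by-hand check to small primes $p\equiv 3\ (\bmod\ 4)$. The ambiguity you must exclude actually occurs at $p\equiv 1\ (\bmod\ 4)$: for $p=13$, $d=2$ one computes $S(2)\equiv 6$, so $T_{2}\equiv 7\ (\bmod\ 13)$, and \emph{both} candidates $7$ and $-6$ lie in $(-2\sqrt{13},\,2\sqrt{13})\approx(-7.21,\,7.21)$; the true value is $T_{2}=-6$ (since $N_{2[13]}=8$). So either check all odd $p<17$ (including $5$ and $13$), or, better, observe that $N_{d[p]}$ is always even (the involution $(x,y)\mapsto(x,-y)$ on $E_d$ has exactly the two fixed points $(\pm 1,0)$), hence $T_{d}=N_{d[p]}-(p-1)+2\left(\frac{d}{p}\right)$ is even; since two candidates in the Hasse window differ by the odd number $p$, exactly one is even, and uniqueness holds for \emph{every} odd $p$ with no case analysis. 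Note also that the corollary you invoke for $N_{d[p]}=p-1-2\left(\frac{d}{p}\right)+T$ is stated in the paper only under $p\equiv 3\ (\bmod\ 4)$, so for the full theorem you should rederive it for all odd $p$, e.g.\ via the Montgomery model exactly as you sketch in your final paragraph.
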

\begin{proof}

Let the curve be defined by ${{x}^{2}}+{{y}^{2}}=1+d{{x}^{2}}{{y}^{2}}(modp)$, then we can express ${{y}^{2}}$ in such way:
\begin{align}\label{d}
{{y}^{2}}\equiv ~\frac{{{x}^{2}}-1} {d{{x}^{2}}-1}\left( mod~p. \right)
 \end{align}

For   ${{x}^{2}}+{{y}^{2}}=1+{{d}^{-1}}{{x}^{2}}{{y}^{2}}(modp)$   we could obtain that

\begin{align}\label{d^{-1}}
{{y}^{2}}\equiv ~\frac{{{x}^{2}}-1} {{{d}^{-1}}{{x}^{2}}-1}\left( mod~p \right)
\end{align}

If  $\left( \frac{d}{p} \right)=1$, then for the fixed ${{x}_{0}}$ a quantity  of  $y$ over ${{\text{F}}_{p}}$ can be calculated by the formula $(\frac{\frac{{{x}^{2}}-1}{{{d}^{-1}}{{x}^{2}}-1}}{p})+1$ for $x$ such that ${{d}^{-1}}{{x}^{2}}+1\equiv 0(\bmod p)$.
For solution  $({{x}_{0}},\,{{y}_{0}})$ to ( \ref{Eqw}), we have  the equality  $y_{0}^{2}\equiv ~\frac{x_{0}^{2}-1}{dx_{0}^{2}-1}\left( mod~p \right)$ and we express    $y_{0}^{2}\equiv ~\frac{1-~\frac{1}{x_{0}^{2}}}{1-~\frac{1}{dx_{0}^{2}}}~d{{~}^{-1}}\equiv ~\frac{{{\left( \frac{1}{{{x}_{0}}} \right)}^{2}}-~1}{\frac{1}{d}{{\left( \frac{1}{{{x}_{0}}} \right)}^{2}}-~1}~d{{~}^{-1}}\equiv ~\frac{{{\left( \frac{1}{{{x}_{0}}} \right)}^{2}}-~1}{{{d}^{-1}}{{\left( \frac{1}{{{x}_{0}}} \right)}^{2}}-~1}~{{d}^{-1}}.$			
Observe that
\begin{equation} \label{rel }
{{y}^{2}}=~\frac{{{x}^{2}}-1}{{{d}^{-1}}{{x}^{2}}-1}=\frac{1-{{x}^{2}}}{1-{{d}^{-1}}{{x}^{2}}}=\frac{(\frac{1}{{{x}^{2}}}-1){{x}^{2}}}{((\frac{d}{{{x}^{2}}})-1){{d}^{-1}}{{x}^{2}}}=\frac{(\frac{1}{{{x}^{2}}}-1)}{((\frac{d}{{{x}^{2}}})-1)}d.				 \end{equation}

Thus,  if   $(x_0,  y_0)$ is  solution  of  (\ref{Eqw}),  then $\left( \frac{1}{{{x}_{0}}},~\frac{{{y}_{0}}}{\sqrt{d}} \right)$  is a solution  to (\ref{d^{-1}}) because last transformations determines that $\frac{\text{y}_{0}^{2}}{d}\equiv ~~\frac{{{d}^{-1}}{{\left( \frac{1}{{{x}_{0}}} \right)}^{2}}-~1}{{{\left( \frac{1}{{{x}_{0}}} \right)}^{2}}-~1}modp$. Therefore last transformations $\left( {{x}_{0}},~{{y}_{0}} \right)~\to (\frac{1}{{{x}_{0}}},~\frac{{{y}_{0}}}{\sqrt{d}})=\left( x,~y \right)$ determines isomorphism and bijection.

In case  $\left( \frac{d}{p} \right)=-1$, then every $x\in {{\text{F}}_{p}}$ is such that $d{{x}^{2}}-1\ne 0$  and  ${{d}^{-1}}{{x}^{2}}-1\ne 0$.
If ${{x}_{0}}\ne 0$, then ${{x}_{0}}$ generate 2 solutions of  (\ref{Eqw})  iff  $x_{0}^{-1}$ gives 0 solutions of (\ref{d^{-1}})  because of (\ref {rel }) yields the following relation

\begin{equation} \label{relQ }
(\frac{\frac{{{x}^{2}}-1}{{{d}^{-1}}{{x}^{2}}-1}}{p})=(\frac{\frac{{{x}^{-2}}-1}{d{{x}^{-2}}-1}}{p})(\frac{d}{p})=-(\frac{\frac{{{x}^{-2}}-1}{d{{x}^{-2}}-1}}{p}).
\end{equation}

Analogous reasons give us that ${{x}_{0}}$ give exactly one solution of  (\ref{Eqw})  iff  $x_{0}^{-1}$ gives 1 solutions of  (\ref{d^{-1}}).
Consider the set $x\in \{1,2,....,p-1\}$ we obtain that  the  total amount of solutions of form $(x_{0}^{-1},\,{{y}_{0}})$ that represent point of  (\ref{Eqw}) and pairs of form  $({{x}_{0}},\,{{y}_{0}})$ that represent point of curve (\ref{d^{-1}}) is $2p-2$.
Also we have two solutions of (\ref{Eqw}) of form $(0,1)$ and $(0,-1)$ and two solutions of (\ref{d^{-1}})  that has form $(0,1)$ and $(0,-1)$.
The proof is fully completed.
\end{proof}



\begin{exam} The number of points on $E_d$ for $d=2$ and ${{d}^{-1}} = 2$ with $p=31$ is equal to ${{N}_{{{2[31]}}}}={{N}_{E_{2}^{-1}[31]}} = p-3 = 28$.
\end{exam}

\begin{exam}
The number of points of $E_d$ over $F_p$ for $p=13$ and $d=2$ is given by $N_{2[13]}=8$. In the case when $p=13$ and $d^{-1}=7$ we have that the number of points of $E_7$ is $N_{7[13]}=20$. Therefore, we have that the sum of orders for these curve is equal to $28=2 \cdot 13 + 2$ which confirms our theorem. The set of points over $F_{13}$ when $d=2$ are precisely
$\{(0,1);
(0,12);
(1,0);
(4,4);
(4,9);
(9,4);
(9,9);
(12,0)\}$,
while for $d=7$, we have the set
 $\left\{ (0,1);
(0,12);
(1,0);
(2,4);
(2,9);
(4,2);
(4,11);
(5,6);
(5,7);
 \right.$    \\
$\left.
(6,5);
(6,8);
(7,5);
 (7,8);
(8,6);
(8,7);
(9,2);
(9,11);
(11,4);
(11,9);
(12,0) \right\} $.
\end{exam}

\begin{exam}
If $p=7$ and $d=2^{-1}\equiv 4 (\bmod \text{ } 7)$, then we have $({\frac{d}{p}})=1 $ and the curve $E_{2^{-1}}$ has four points which are $(0,1); (0,6); (1,0); (6,0)$, and the in case $p=7$ for $d=2 (\bmod \text{ }7)$, the curve $E_{2^{-1}}$ also has four points which are $(0,1); (0,6); (1,0); (6,0)$.
\end{exam}

\begin{defn}
We call the embedding degree a minimal power $k$ of a finite field extension such that the group of points of the curve can be embedded in the multiplicative group of ${{\mathbb{F}}_{{{p}^{k}}}}$.
\end{defn}

Let us obtain conditions of embedding \cite{Barret} for the group of supersingular curves ${{E}_{d}}[{{\mathbb{F}}_{p}}]$ of order $p$ in the multiplicative group of field ${{\mathbb{F}}_{{{p}^{k}}}}$ whose embedding degree is $k = 12$ \cite{Barret}. We now utilise the Zsigmondy theorem which implies that a suitable characteristic of field $ {\mathbb{F}}_p$ is an arbitrary prime $p$ which do not divide $12$ and satisfies the condition $q\left| {\text{P}_{12}(p)} \right.$, where ${{\text{P}}_{12}}(x)$ is the cyclotomic polynomial.  This $p$ will satisfy the necessary conditions $({{x}^{n}}-1) \not{|} \text{ } p$ for an arbitrary $n=1,...,11$.

\begin{prop}
The degree of embedding for the group of a supersingular curve $E_d$ is equal to 2.
\end{prop}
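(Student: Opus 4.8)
The plan is to reduce the statement to a single congruence modulo the group order, exploiting the supersingularity already established. First I would invoke Theorem \ref{first result} together with Corollary \ref{cor result}: for $p \equiv 3 \pmod 4$ satisfying the supersingularity condition \eqref{Super}, the completed group of points of $E_d$ has order $\overline{N}_{d[p]} = p+1$. By the Definition of embedding degree, the goal is then to find the least $k$ for which the relevant cyclic subgroup of the point group — say of prime order $\ell \mid p+1$ — can be embedded into the multiplicative group $\mathbb{F}_{p^k}^*$, which is equivalent to finding the least $k$ with $\ell \mid p^k - 1$.

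The decisive observation is elementary. Since $\ell \mid p+1$, we have $p \equiv -1 \pmod \ell$, hence $p^2 \equiv 1 \pmod \ell$, so $\ell \mid p^2 - 1$; this already forces $k \le 2$. To exclude $k = 1$, note that $p - 1 \equiv -2 \pmod \ell$, and as $\ell$ is an odd prime dividing $p+1$ with $p > 2$ (so $\ell \ge 3$), we have $\ell \nmid 2$ and therefore $\ell \nmid p - 1$. Thus no embedding into $\mathbb{F}_p^*$ exists, and the minimal admissible degree is exactly $k = 2$.

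The identical computation applies to the full order $N = p+1$ in place of $\ell$: from $p \equiv -1 \pmod{p+1}$ one obtains $p^2 \equiv 1 \pmod{p+1}$ while $p \not\equiv 1 \pmod{p+1}$ (because $p+1 > 2$), so the multiplicative order of $p$ modulo $p+1$ equals $2$. This confirms that the least $k$ with $(p+1) \mid p^k - 1$ is precisely $2$, in agreement with the classical bound that supersingular curves over a prime field have embedding degree at most $2$.

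The main obstacle I anticipate is the bookkeeping surrounding the $2$-part of the group order: an Edwards curve always carries $2$- and $4$-torsion, so one must state the embedding with respect to a subgroup of odd prime order $\ell$, where the argument is clean, rather than the full group, which need not be cyclic and hence need not embed into the cyclic group $\mathbb{F}_{p^k}^*$ at all. Once the embedding degree is interpreted relative to this prime-order subgroup — the standard convention in pairing-based cryptography — the argument above is complete and yields $k = 2$.
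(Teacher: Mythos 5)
Your proof is correct and takes essentially the same route as the paper: both arguments rest on the observation that the group order $p+1$ (resp.\ $p^{k}+1$ over $\mathbb{F}_{p^{k}}$, as the paper phrases it) divides $p^{2}-1=(p-1)(p+1)$ but does not divide $p-1$, whence the minimal admissible extension degree is $2$. Your extra care in passing to a subgroup of odd prime order $\ell \mid p+1$ is a sensible refinement the paper omits --- the paper tacitly embeds the whole, possibly non-cyclic, group into the cyclic group $\mathbb{F}_{p^{k}}^{*}$ --- but it does not change the underlying divisibility argument.
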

The order of the group of a supersingular curve ${{E}_{d}}$ is equal to ${{p}^{k}}+1$. It should be observed that ${{p}^{k}}+1$ divides ${{p}^{2k}} - 1$, but ${{p}^{k}}+1$ does not divide expressions of the form ${{p}^{2l}}-1$ with $l<k$. This division does not work for smaller values of $l$ due to the decomposition of the expression ${{p}^{2k}}-1=({{p}^{k}}-1)({{p}^{k}}+1)$. Therefore, we can use the definition to conclude that the degree of immersion must be 2, confirming the proposition.

Consider ${{\text{E}}_{2}}$ over ${{\text{F}}_{{{p}^{2}}}}$,  for instance we assume  $p=3$.  We define ${{\text{F}}_{9}}$  as  ${{\text{F}}_{3}}(\alpha )$, where $\alpha $ is a root of  ${{x}^{2}}+1=0$  over ${{\text{F}}_{9}}$.    Therefore elements of ${{\text{F}}_{9}}$ have form:  $a+b\alpha $,  where  $a,\,\,b\in {{\text{F}}_{3}}$.  So we assume that $x\in \{\pm (\alpha +1),\,\,\pm (\alpha -1),\,\,\pm \alpha \}$   and check its belonging  to ${{\text{E}}_{2}}$.
 For instance if  $x=\pm (\alpha +1)$ then ${{x}^{2}}={{\alpha }^{2}}+2\alpha +1=2\alpha = -\alpha$.
  Also in this case $y^2=\frac{2\alpha -1}{\alpha -1}= \frac{(2\alpha -1)(\alpha +1)}{(\alpha -1)(\alpha +1)}= \frac{(2\alpha -1)(\alpha +1)}{(\alpha -1)(\alpha +1)}= \frac{\alpha }{ -2} ={\alpha }.$ Therefore the correspondent second coordinate is $y= \pm({\alpha -1})$.
The similar computations lead us to full the following list of curves points.

\begin{table}[ht]
\begin{center}
\begin{tabular}{|c|c|c|c|c|c|c|} 
\hline
$x$ & $\pm 1$ & $0$ & $\pm (\alpha +1)$ & $\pm (\alpha -1)$  \\
\hline
$y$ & $0$ & $\pm 1$ & $\pm (\alpha -1)$ & $\pm (\alpha +1)$  \\
\hline
\end{tabular}
\end{center}
\caption{\label{tab: points} Points of Edwards curve over square extension.}
\end{table}


The total amount is 12 affine points that confirms Corollary \ref{corext} and Theorem \ref{extnumb} because of $p^n-3-2(-p)^{\frac{n}{2}}=3^2-3-2(-3)=12$.




\section{Conclusion}
The new method for order curve counting was founded for Edwards and elliptic curves. The criterion for supersingularity was additionally obtained.










\end{document}